\newtheorem{Lemma}{Lemma}[section]
\newtheorem{Theorem}{Theorem}
\newtheorem{Proposition}[Lemma]{Proposition}
\newtheorem{Corollary}[Lemma]{Corollary}
\newtheorem{Hypothesis}[Lemma]{Hypothesis}
\newtheorem{Example}{Example}
\newenvironment{Acknowledgement}%
 {\begin{trivlist}\item[]\textbf{Acknowledgements.}}{\end{trivlist}}
\makeatletter\@addtoreset{figure}{section}\makeatother
\makeatletter \@addtoreset{equation}{section} \makeatother
\title{Oblique and checkerboard patterns in the quenched Cahn-Hilliard model}
\author{Ryan Goh\\\small Boston University\\\small Department of Mathematics and Statistics\\\small Boston, MA, USA \and Ben Hosek\\\small Boston University\\\small Department of Mathematics and Statistics\\\small Boston, MA, USA\\\small\texttt{bhosek@bu.edu}}
\date{}
\begin{document}

\maketitle

\begin{abstract}
We consider transversely modulated fronts in a directionally quenched Cahn-Hilliard equation, posed on a two-dimensional infinite channel, with both parameter and source-term type heterogeneities. Such quenching heterogeneities travel through the domain, excite instabilities, and can select the pattern formed in their wake. We in particular study striped patterns which are oblique to the quenching direction and checkerboard type patterns. Under generic spectral assumptions, these patterns arise via an $O(2)$-Hopf bifurcation as the quenching speed is varied, with symmetries arising from translations and reflections in the transverse variable. We employ an abstract functional analytic approach to establish such patterns near the bifurcation point.  Exponential weights are used to address neutral continuous spectrum, and a co-domain restriction is used to address neutral mass-flux. We also give a method to determine the direction of bifurcation of fronts. We then give an explicit example for which our hypotheses are satisfied and for which bifurcating fronts can be investigated numerically.
\end{abstract}

\paragraph{Statements and Declarations}\textit{Competing Interests:} The authors have no competing interests to declare.

\paragraph{Keywords:} Cahn-Hilliard Equation; Front Solutions; Transverse Patterns; $O(2)$-Hopf; Heterogeneity; Directional Quenching

\paragraph{MSC Classification:} 35B36, 35B32, 37C81, 37L20

\begin{Acknowledgement}
    The authors were partially supported by the National Science Foundation through grants NSF-DMS-2006887 (RG, BH) and NSF-DMS-1616064 (BH).
\end{Acknowledgement}

\section{Introduction}
\subsection{Motivation}
The Cahn-Hilliard equation
\begin{equation}
\partial_tu=-\Delta(\Delta u+f(u)), \quad f(u) = u - u^3, \quad u(\mathbf{x},t)\in \mathbb{R},\quad (\mathbf{x},t)\in\mathbb{R}^d\times \mathbb{R},
\end{equation}
is a prototypical and well-studied model for phase separation processes in two-phase systems in a variety of contexts; see for example \cite{Miranville} for a mathematical review with many references. Through different initial conditions and boundary conditions, this equation can exhibit many different types of patterns. In particular, small random initial data, say on the unbounded domain, tends to lead to the formation of a random assortment of layers, stripes, spots, and defects, most of which are unstable via local coarsening. We remark that this equation is a $H^{-1}$ gradient-flow with respect to the following free-energy
\begin{equation}\mathcal{E}[u]=\int_{\Omega}\frac{1}{2}|\nabla u|^2+F(u)d\mathbf{x},\nonumber\end{equation}
defined on a generic domain $\Omega$, with symmetric double-well potential $ F(u) = \frac{1}{4}(1-u^2)^2$ which favors the states $u = \pm1$. 

In several experimental and phenomenological settings we can observe phase separation in a binary phase alloy, or a model thereof \cite{Foard12,Guo,Thiele21,Krekhov,Wilczek_2015}.  In many of these experiments, a process known as directional quenching has been used to induce phase separation in a controlled manner and select the pattern formed in the wake. Indeed, depending on the initial data, and the shape and speed of the quench, a variety of patterns can be formed including  regular spot arrangements, stripes of different orientations and wavenumbers, layers between pure $\pm 1$ states, as well as square and rhomb patterns. Here a quench travels through the spatial domain inciting instability in a given homogeneous equilibrium state, typically by spatiotemporally mediating the potential $F$ between single-well and double-well configuration, the latter of which is given above. In our work, and in parallel with several of the above mentioned references, we consider quenched patterns on a two-dimensional spatial domain. 

Recent work in this area includes \cite{Goh}, which studied quenched stripe formation in one spatial dimension for a variety of quenching and source type heterogeneities with spinodally unstable regions which are bounded in $x$.  The work \cite{Monteiro} studies two-dimensional directional quenched fronts with unbounded quenching domain in both the Allen-Cahn and Cahn-Hilliard equations showing that with zero quenching speed pure phase selection, vertical stripes, horizontal interfaces, and horizontal stripes can be formed, though oblique stripes cannot be formed. Existence results can also be obtained for non-zero quenching speeds for many of the aforementioned structures, but interestingly, these works did not address the formation of stripes which are oblique to the quenching interface in the moving interface case.  Such an ambiguity motivates and is one the main focuses of our work. Results from \cite{GohScheel}, which studied the Swift-Hohenberg equation, showed that weakly oblique stripes exist as perturbations of parallel stripes.  Because of this, and results of numerical simulations (discussed below), we expect oblique stripes to exist in the quenched Cahn-Hilliard equation. Our work also reveals cellular, or ``checkerboard" type patterns which biurcate with oblique stripes.

Following the functional analytic methods of \cite{Goh}, we study the bifurcations of transversely modulated patterns in the presence of quenching terms which have localized or bounded spinodal unstable regimes.  That is we look for bifurcating fronts which are spatially patterned but are still asymptotically constant, with the pattern state lying in a potentially moving localized spatial region. This modeling assumption allows us to focus on the pattern forming dynamics and behavior just behind the quenching line.  One hopes to build upon these results to establish large amplitude patterns, as well as fronts which converge to these patterns asymptotically in the far-field.

We seek to understand how a spatial heterogeneity can select patterns in the Cahn-Hilliard equation in two spatial dimensions under directional quenching. Our assumptions are roughly as follows. We consider nonlinearities of the form $f(x-ct,u)$ with a given front solution $u_*(x-ct)$, both of which converge exponentially fast in the co-moving frame $\tilde x:= x - ct$ with quenching speed $c$, to states $f_\pm(u)$ and $u_\pm$.  Further we assume for $\Tilde{u}_\pm$ close to $u_\pm$ there exist a family of smooth asymptotically constant front solutions asymptotic to $\Tilde{u}_\pm$. 
We further assume a generic transverse Hopf instability of the associated linearization $L$ about $u_*$. In particular, we assume that an isolated, semi-simple pair of complex conjugate eigenvalues with transversely modulated eigenfunctions cross the imaginary axis as the quench speed $c$ varies while there are no other resonant spectrum at integer multiples of the Hopf frequency. As the quenched equation, with heterogeneity varying only in the $x$ direction, possesses a reflection symmetry $y\mapsto -y$ in the vertical direction along the quenching line, we generically assume that the Hopf eigenvalues have algebraic and geometric multiplicity two. As there is also a translation symmetry in $y$, we hence study Hopf-instabilities in the presence of a transverse $O(2)$ symmetry.  We mention the works \cite{barker2021transverse,pogan20152}, which study $O(2)$-Hopf bifurcations in viscous slow magnetohydrodynamic shocks and in viscous shock waves in a channel, respectively, and handle similar problems to ours in different ways.

Under these assumptions, we establish the existence a pair of one-parameter families of time-periodic solutions which bifurcate from the front solution $u_*$. These branches, which take the form of oblique stripe and checkerboard patterns, respectively correspond to ``rotating" and ``standing" waves under the $O(2)$ symmetry group.  Our results also give computable bifurcation coefficients which can be used determine whether these bifurcations are subcritical or supercritical.

The proof is done through an abstract functional analytic approach.  We use exponentially weighted spaces to push neutral continuous spectrum away from the imaginary axis and, along with co-domain restriction which takes into account mass-conservation, we obtain a linearization of \eqref{comoving_inhom} which is a Fredholm operator of index 0.  This allows us to perform a Lyapunov-Schmidt reduction to produce a set of finite-dimensional bifurcation equations, which we can then use to establish bifurcating solutions, obtain their leading order expansions, and determine their bifurcation direction.

The rest of the introduction is devoted to developing our setting, stating our hypotheses, and finally stating the theorem we wish to prove.

\subsection{Our Setting}
We consider the following modified Cahn-Hilliard equation with spatiotemporal heterogeneities in both the nonlinearity $f$ (corresponding to changes in the potential well) as well as a moving source term $\chi$ which adds mass to the system as it travels. We consider heterogeneities which rigidly propagate in the horizontal direction with fixed speed $c$, leaving a front solution its wake. Our equation takes the form, 
\begin{equation}\label{e:ch-1}
\partial_tu=-\Delta(\Delta u+f(x-ct,u))+c\chi(x-ct;c), \quad \mathbf{x} = (x,y)\in\mathbb{R}^2, \,\, t\in \mathbb{R}, \,\,\, \Delta = \partial_x^2+\partial_y^2.
\end{equation}

Before continuing with our general hypotheses on the heterogeneities, front solutions, and their spectra, we give a specific example which will motivate and guide our work.  
\begin{Example} \label{ex:top} {\it Tophat quench}
\end{Example}
Consider a cubic-quintic nonlinearity, 
\begin{equation}\label{e:nlcq}
    f(x-ct,u)=h(x-ct)u+\gamma u^3-u^5,\qquad \gamma \in \mathbb{R},
\end{equation}
with a parameter heterogeneity $h(x-ct)$ on the linear term. Here $h$ is 1 in the interval $[-K+\delta,K-\delta]$, -1 outside the interval $[-K,K]$, and smoothly and monotonically transitions between the two states in between the intervals; see \eqref{e:hex} for an approximate example. 

 When $h\equiv1$, the sign of the parameter $\gamma$, roughly speaking, mediates between supercritical ($\gamma<0$) and subcritical ($\gamma>0$) pattern-forming dynamics. Indeed in this situation, local perturbations of the trivial state $u\equiv0$ will grow, invade, and form patterned states in different manners for these two cases, with the former corresponding to \emph{pulled} front invasion where the linear dynamics about $u\equiv0$ govern front dynamics, and \emph{pushed} front invasion where the nonlinear dynamics behind the interface accelerate invasion.  For both parameter domains, one finds patterned  states in the quenched system for quenching speeds $c$ approximately below the free invasion speed. We remark that in the pushed case such quenches have been observed to form patterns for quenching speeds faster than the free invasion speed; see \cite{goh2016pattern}. 

Figure \ref{fig:ch-ex1} gives snapshots of numerical simulations of such a quenched nonlinearity inducing oblique stripes and checkerboard patterns in a periodic channel in the pulled case, $\gamma = -1$. We note these solutions are time-periodic so that a $y$ cross-section of $u$ resembles a traveling, or rotating, wave in the former and a standing wave in the latter. 
\begin{figure}[htbp]
 \centering
\includegraphics[trim={0.5in 1.5in 0 2.25in},clip]{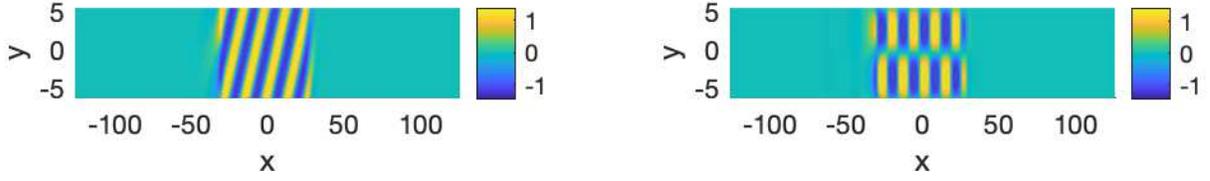}
\vspace{-1in}
    \caption{Numerically, one can find these checkerboards (right) and oblique stripes (left) arising from Example \ref{ex:top}. Here $\gamma = -1$, and patterns arise from the trivial front $u_*=0$.  The quench speed was chosen to be $c=1$. }\label{fig:ch-ex1}
\end{figure}
This example is investigated in more detail in Section \ref{s:ex}.

\paragraph{General Setting}
In the general case, we consider solutions $u(x,y,t)$ of \eqref{e:ch-1} which are periodic in both $t$ and $y$.  In particular we have $t\in[0,2\pi/\omega)$ and $y\in[0,2\pi/k)$, where $\omega$ is the frequency in time and $k$ is the frequency in the second spatial variable.  We then rescale time to the new variable $\tau=\omega t$, we rescale the vertical spatial variable to $\Tilde{y}=ky$, and put the system into the co-moving frame $\Tilde{x}=x-ct$. Simplifying our notation by removing the tildes and writing $\Delta_k:=\partial_x^2+k^2\partial_y^2$, we can then write the Cahn-Hilliard equation as \begin{equation}\label{comoving_inhom}
\begin{split}
     \omega\partial_\tau u&=-\Delta_k(\Delta_k u+f(x,u))+c\partial_xu+c\chi(x;c)\\
    u(x,y,\tau)&=u(x,y,\tau+2\pi)\\
    u(x,y,\tau)&=u(x,y+2\pi,\tau).
\end{split}
\end{equation}
Note that there are symmetries in the $y$ variable. In particular \eqref{comoving_inhom} is invariant under translations  $y\mapsto y+\theta$ and reflections $y\mapsto-y$. Hence any bifurcating fronts will occur in the presence of spatial symmetries.  In particular, the symmetry group will be $O(2)$, being the semidirect product of $SO(2)$ (which contains rotations) with $\mathbb{Z}_2$ (which represents the reflections).  With these preliminaries set, we are now ready to present our hypotheses.

\subsection{Hypotheses and Main Result}
We begin our hypotheses by specifying restrictions on the form of the nonlinearity $f$ and an associated traveling front solution, which propagates with fixed speed, and is asymptotically constant in space, $u_*.$ We remark that these hypotheses are an extension of the one-dimensional setting of \cite{Goh} to the two-dimensional case. The quenching speed $c$ will serve as the main bifurcation parameter of our study.

\begin{Hypothesis}\label{hyp1}
    The nonlinearity $f$ is smooth in both $x$ and $u$, and converges with an exponential rate to smooth functions $f_\pm:=f_\pm(u)$ as $x\to\pm\infty$.  This convergence is uniform for $u$ in bounded sets.
\end{Hypothesis}

\begin{Hypothesis}\label{hyp2}
    There exists a front solution $u_*(x;c_*)$ of \eqref{comoving_inhom} for some $c_*>0$ with \[\lim_{x\to\pm\infty}u_*(x;c_*)=u_\pm.\]
    Moreover, $u_*\in\mathcal{C}^4(\mathbb{R})$ and \begin{equation}|u_*(x)-u_\pm|+\sum_{j=1}^3|\partial_x^ju_*(x)|\leq Ce^{-\beta|x|}\nonumber\end{equation}
    for some $C,\beta>0$.  We refer to this front solution as the primary front.
\end{Hypothesis}

This primary front will be the solution to the Cahn-Hilliard equation from which our patterns bifurcate.  In our previous example, the trivial solution $u_*\equiv0$ plays this role.

Next, we assume that there are no additional neutral modes of the spatial linearization at the origin, and thus that the Hopf instability is the sole neutral mode at the bifurcation speed $c_*$

\begin{Hypothesis}\label{hyp3}
    The point $0\in\mathbb{C}$ is not contained in the extended point spectrum of the linearization $L:H^4(\mathbb{R}\times\mathbb{T})\subset L^2(\mathbb{R}\times\mathbb{T})\to L^2(\mathbb{R}\times\mathbb{T})$ defined as \[Lv=-\Delta_k(\Delta_k v+\partial_uf(x,u_*(x))v)+c_*\partial_xv.\]
\end{Hypothesis}
Furthermore, we assume that fronts persist for perturbations of asymptotic states of the front which preserve the difference between values at $x = \pm\infty.$
\begin{Hypothesis}\label{hyp4}
    Assuming the above hypotheses, for $\Tilde{u}_\pm$ in a small neighborhood of $u_\pm$ with $\Tilde{u}_+-\Tilde{u}_-=u_+-u_-$, and $c$ close to $c_*$, there exists a family of smooth front solutions $u_*(x;c)$ asymptotic to $\Tilde{u}_\pm$ satisfying Hypothesis \ref{hyp2}.
\end{Hypothesis}

This implies that our assumptions are open; that we can vary $u_\pm$ and still be able to find a front solution.  The next hypothesis ensures that there exists a generic Hopf-instability, with transversely modulated eigenfunctions, in the presence of a $y$-reflection symmetry.
\begin{Hypothesis}\label{hyp5}
    The operator $L$ defined on $L^2(\mathbb{R}\times\mathbb{T})$ as above has an isolated pair of eigenvalues,  $\lambda_\pm(c)=\mu(c)\pm i\kappa(c)$, with algebraic and geometric multiplicity two and $L^2(\mathbb{R}\times\mathbb{T})$-eigenfunctions $e^{iy}p(x), e^{-iy}\overline{p(x)}$ along with their images under the reflection symmetry $y\mapsto-y$, such that for some $\omega_*\neq0$ and $c_*>0$ 
    \begin{equation}
    \mu(c_*)=0,\qquad \mu'(c_*)>0\qquad \kappa(c_*)=\omega_*.
    \end{equation}
\end{Hypothesis}

Here we note that the geometrically double eigenvalues are induced by the $y$-reflection symmetry, and the above assumption guarantees that they are semi-simple.  The higher multiplicity of the Hopf modes precludes application of the standard Hopf Theorem and, due to the presence of symmetry, requires the application an equivariant Hopf theorem.   For more information see Appendix \ref{app2}.

Recall our symmetries are the $SO(2)$ action $y\mapsto y+\theta$,  and $\mathbb{Z}_2$ reflection action $y\mapsto-y$.  The action of $SO(2)$ leads to rotating, or traveling, waves, which appear in our system as oblique striped patterns.  The $\mathbb{Z}_2$-action produces standing waves, which appear as checkerboard patterns.

We also define the $L^2$-adjoint eigenfunctions to $p$ and $\overline{p}$, as $\psi_+$ and $\psi_-$ respectively, whose corresponding eigenvalues must have the same algebraic and geometric multiplicity as the eigenvalues $\lambda_\pm(c)$.  We further normalize $\psi_+$ and $\psi_-$ such that $\langle p,\psi_+\rangle_{L^2}=\langle\overline{p},\psi_-\rangle_{L^2}=1$.

Finally, we make a non-resonance assumption which guarantees that there are no point or essential spectrum touching the imaginary axis at frequencies which are non-zero multiples of the Hopf frequency $\omega_*.$ Here $\lambda = 0$ is not included to take into account the neutral continuous spectrum which touches the origin in a quadratic tangency induced by the neutral mass flux/conservation structure of \eqref{e:ch-1}.
\begin{Hypothesis}\label{hyp6}
    For all $\lambda\in i\omega_*(\mathbb{Z}\setminus\{0,\pm1\})$, the operator $L-\lambda$ is invertible when considered on the unweighted space $L^2(\mathbb{R}\times\mathbb{T})$.
\end{Hypothesis}

With these hypotheses, we are now ready to state our result.

\begin{Theorem}\label{thm1}
    Assume Hypotheses \ref{hyp1} through \ref{hyp6}.  Then there exist a pair of one-parameter families of time-periodic solutions which bifurcate from the front solution $u_*(x;c)$ as the speed $c$ varies through $c_*$.
    The bifurcation equation has, to leading order, the form\begin{equation}
        \Theta(a,b;\Tilde{c},\Tilde{\omega})=\left(\lambda_{\Tilde{c}}(0)\Tilde{c}+i\Tilde{\omega}+\frac{\theta_1+\theta_2}{2}N\right)\begin{pmatrix}
            a\\b
        \end{pmatrix}+\frac{\theta_2-\theta_1}{2}\delta\begin{pmatrix}
            a\\-b
        \end{pmatrix},
    \end{equation} 
    with amplitudes $a,b\in\mathbb{C}$, $N=|a|^2+|b|^2, \delta=|b|^2-|a|^2,$ and 
   \begin{align}
        \theta_1&=\left\langle-\Delta_k\left(\frac{1}{2}\partial_u^3f(x,u_*)p^2\overline{p}+\partial_u^2f(x,u_*)[p\phi_{a\overline{a}}+\overline{p}\phi_{aa}]\right),\psi_+\right\rangle_{L^2},\\
        \theta_2&=\left\langle-\Delta_k(\partial_u^3f(x,u_*)p^2\overline{p}+\partial_u^2f(x,u_*)[p\phi_{b\overline{b}}+p\phi_{a\overline{b}}+\overline{p}\phi_{ab}]),\psi_+\right\rangle_{L^2}.
    \end{align} 
    Here, the $\phi_{ij}$'s are particular functions of $x$ which can be obtained using the linear operator and evaluations of derivatives of $f$ on the front $u_*$ (see equations \eqref{phi_a_a} to \eqref{phi_barb_barb}), and $a$ and $b$ are the coordinates of the kernel of \eqref{linearized}, the time-dependent linearization of the Cahn-Hilliard equation. The direction of the bifurcation can be determined by examining the relationship between $\frac{\theta_1+\theta_2}{2}$ and $\frac{\theta_2-\theta_1}{2}$.
    \begin{itemize}
        \item If $\mathrm{Re}\,\left(\frac{\theta_1+\theta_2}{2}\right)>0$, then standing waves (checkerboard patterns) will bifurcate as $c$ increases through $c_*$
        \item If $\mathrm{Re}\,\left(\frac{\theta_1+\theta_2}{2}\right)<0$, then standing waves will bifurcate as $c$ decreases through $c_*$
        \item If $\mathrm{Re}\,\left(\frac{\theta_2-\theta_1}{2}\right)<\mathrm{Re}\,\left(\frac{\theta_1+\theta_2}{2}\right)$, then rotating waves (oblique stripe patterns) will bifurcate as $c$ increases through $c_*$
        \item If $\mathrm{Re}\,\left(\frac{\theta_1+\theta_2}{2}\right)<\mathrm{Re}\,\left(\frac{\theta_2-\theta_1}{2}\right)$, then rotating waves bifurcate as $c$ decreases through $c_*$
    \end{itemize}
The family of oblique stripe and checkerboard solutions can be parameterized in terms of the amplitude $a$ as
    \begin{align}
        u_{os} &=  u_*+2a\mathrm{Re}\,(e^{i(\tau+y)}p(x))+O(a^2),\\
        u_{cb}&=u_*+4a\cos(y)\mathrm{Re}\,(e^{i\tau}p(x))+O(a^2),
    \end{align}
and the bifurcation parameter $c$ can also be parameterized in terms of the amplitude $a$ for both oblique stripes and checkerboards as
\begin{align}
        c_{os}&=c_*-\frac{\mathrm{Re}\,(\theta_1)}{\mu'(c_*)}a^2+O(|a|^3),\label{c_os}\\
        c_{cb}&=c_*-\frac{\mathrm{Re}\,(\theta_1+\theta_2)}{\mu'(c_*)}a^2+O(|a|^3).\label{c_cb}
    \end{align}
\end{Theorem}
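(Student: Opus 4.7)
The plan is to cast \eqref{comoving_inhom} as an operator equation $\mathcal{F}(u,c,\omega)=0$ on a space of $(\tau,y)$-doubly periodic functions, linearize about the primary front $u_*(x;c)$, and perform an $O(2)$-equivariant Lyapunov--Schmidt reduction near $(c,\omega)=(c_*,\omega_*)$. The linearized operator is $\mathcal{L}:=\omega_*\partial_\tau - L$, and by Hypothesis \ref{hyp5} its kernel is four real dimensional, spanned by the four traveling-wave modes $e^{i(\tau\pm y)}p(x)$ together with their complex conjugates. I would parametrize this kernel by two complex amplitudes $a,b\in\mathbb{C}$ so that the ansatz reads
\begin{equation}
u = u_*(x;c) + a\,e^{i(\tau+y)}p(x) + b\,e^{i(\tau-y)}p(x) + \mathrm{c.c.} + w,\nonumber
\end{equation}
with $w$ in the spectral complement. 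The $SO(2)$ action $y\mapsto y+\theta$ sends $(a,b)\mapsto(ae^{i\theta},be^{-i\theta})$, while the $\mathbb{Z}_2$ reflection swaps $(a,b)\mapsto(b,a)$, so the reduced map $\Theta(a,b;\tilde c,\tilde\omega)$ must be $O(2)\times S^1$-equivariant. This already dictates the structural form of $\Theta$ quoted in the statement: to cubic order only $|a|^2+|b|^2$ and $|a|^2-|b|^2$ may appear as scalar multipliers acting respectively as the identity and as $\mathrm{diag}(1,-1)$.

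The main analytic obstacle, and the first one I would attack, is making the implicit function theorem run on $w$. The operator $\mathcal{L}$ has neutral continuous spectrum touching $0$ quadratically because of mass conservation, and its absolute spectrum reaches the imaginary axis at each integer multiple of $i\omega_*$. Following the strategy emphasized in the introduction, I would introduce an exponential weight $e^{\eta\langle x\rangle}$ with $\eta>0$ small, which by Hypothesis \ref{hyp1} and the exponential convergence in Hypothesis \ref{hyp2} pushes the essential spectrum of the asymptotic operators strictly into $\mathrm{Re}\,\lambda<0$, and restrict the codomain to the zero-mean-flux subspace dictated by the $-\Delta_k$ in front of the nonlinearity and source term. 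Hypotheses \ref{hyp3} and \ref{hyp6} then rule out resonances at $\lambda\in i\omega_*\mathbb{Z}\setminus\{\pm i\omega_*\}$, so after decomposing $w$ into its Fourier modes $e^{in\tau}e^{imy}w_{n,m}(x)$ the operator $\mathcal{L}$ becomes invertible on the spectral complement of the kernel within the codomain-restricted weighted space. This lets me solve for $w=w(a,b,\tilde c,\tilde\omega)$ as a smooth function that is at least quadratic in $(a,b)$, and extract the reduced bifurcation equation by projecting the residual onto the adjoint kernel spanned by $\psi_\pm$ and their reflections.

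Next I would compute the cubic normal-form coefficients $\theta_1,\theta_2$. Expanding $f(x,u)$ around $u_*$ produces quadratic source terms $\partial_u^2 f(x,u_*)\cdot(\cdot)^2/2$ and cubic source terms $\partial_u^3 f(x,u_*)\cdot(\cdot)^3/6$, each premultiplied by $-\Delta_k$. The quadratic terms feed back into $w$ at second order through non-resonant Fourier modes $(n,m)\in\{(0,0),(0,\pm2),(\pm2,0),(\pm2,\pm2)\}$; I would solve for the second-order correctors $\phi_{aa},\phi_{a\bar a},\phi_{ab},\phi_{a\bar b},\phi_{b\bar b},\phi_{bb}$ by inverting $\mathcal{L}$ on each non-resonant Fourier slot, obtaining the formulas referenced as \eqref{phi_a_a}--\eqref{phi_barb_barb}. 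Feeding these $\phi_{ij}$ back through the quadratic nonlinearity together with the genuinely cubic contribution, and projecting the resonant pieces at mode $e^{i(\tau+y)}$ onto $\psi_+$, yields exactly the inner-product expressions defining $\theta_1$ and $\theta_2$; the coefficient $\lambda_{\tilde c}(0)$ likewise arises from differentiating the eigenvalue $\lambda_+(c)$ in $c$ and pairing with $\psi_+$, using Hypothesis \ref{hyp5} so that $\mathrm{Re}\,\lambda_{\tilde c}(0)=\mu'(c_*)>0$.

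The final step is to solve the reduced system $\Theta(a,b;\tilde c,\tilde\omega)=0$. By the $O(2)$ action one may restrict to the two isotropy-fixed branches: the rotating-wave branch $\{b=0\}$ (oblique stripes), where the equation collapses to $\lambda_{\tilde c}(0)\tilde c+i\tilde\omega+\theta_1|a|^2=0$, and the standing-wave branch $\{|a|=|b|\}$ (checkerboards), where it collapses to $\lambda_{\tilde c}(0)\tilde c+i\tilde\omega+(\theta_1+\theta_2)|a|^2=0$; splitting real and imaginary parts and using $\mu'(c_*)>0$ gives both the amplitude--speed relations \eqref{c_os}--\eqref{c_cb} and the criteria for sub- vs. supercritical bifurcation stated in the theorem, the comparisons between $\mathrm{Re}(\theta_1+\theta_2)/2$ and $\mathrm{Re}(\theta_2-\theta_1)/2$ corresponding to the two isotropy subgroups dictated by the equivariant Hopf framework of Appendix \ref{app2}. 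The expected hard part is verifying that the codomain restriction and weighted-space setup are simultaneously compatible with $O(2)$-equivariance and with invertibility of $\mathcal{L}$ on \emph{every} non-resonant Fourier slot, so that the formulas for $\phi_{ij}$ are well defined; once that Fredholm bookkeeping is in place, the remainder of the proof is a standard application of equivariant Lyapunov--Schmidt reduction.
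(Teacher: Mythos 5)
Your proposal follows essentially the same route as the paper: the abstract formulation in exponentially weighted spaces with the mass-flux codomain restriction, the Fredholm/Lyapunov--Schmidt reduction with second-order correctors $\phi_{ij}$ solved on the non-resonant Fourier modes, projection onto the adjoint kernel to obtain $\theta_1,\theta_2$, and the equivariant Hopf analysis on the isotropy-fixed subspaces $\{b=0\}$ and $\{a=b\}$ yielding the branches and expansions \eqref{c_os}--\eqref{c_cb}. The only cosmetic difference is that you invoke $O(2)\times S^1$-equivariance of the reduced map to dictate the normal form directly, where the paper verifies the same structure concretely by checking $\phi_{a\overline{a}}=\phi_{b\overline{b}}$, $\phi_{a\overline{b}}=\phi_{\overline{a}b}$, $\phi_{aa}=\phi_{bb}$ so that the coefficients of $\Theta_1$ and $\Theta_3$ agree.
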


This theorem tells us when patterns bifurcate from our front solution $u_*$, as well as giving computable coefficients which determine the direction of bifurcation of patterns.

We prove this theorem in Section \ref{s:ab}.  In Section \ref{s:ex} we provide an example of a specific nonlinearity and heterogeneity for which we observe this behavior.  In Section \ref{s:disc}  we discuss our work and mention a few open areas of research stemming from it.  Finally, in the appendices we provide proofs establishing Fredholm properties of the linearization we consider, and give a short summary of the abstract theory of Hopf bifurcation with $O(2)$ symmetry.

\section{Abstract Results}\label{s:ab}
In this section, we seek to prove our theorem.  We approach \eqref{comoving_inhom} as an abstract nonlinear equation, for which the primary front $u_*$ is a zero. We first establish Fredholm properties of the linearization at $u_*$ in a space of $y,t$ periodic functions. The independence of the front $u_*$ in $y$ and $t$ allows for a Fourier decomposition of the operator and its Fredholm index.  We use exponentially weighted spaces to push neutral continuous spectrum away from the imaginary axis in the $t,y$-independent component. Then, using a co-domain restriction which projects off of the constants to address neutral mass flux in $x$, we obtain an operator which has Fredholm index 0, with kernel spanned by time-modulated forms of the transverse eigenfunctions. We then perform a Lyapunov-Schmidt decomposition to reduce the infinite-dimensional equation to a finite dimensional bifurcation equation in terms of the kernel variables, which can be put into $O(2)$-Hopf normal form, allowing us to establish bifurcating transversely patterned solutions and give expressions for the direction of bifurcation.  To begin, we introduce some useful notation.

\paragraph{Function spaces}

Recall, we consider the domain $(x,y,\tau)\in\mathbb{R}\times\mathbb{T}_y\times\mathbb{T}_\tau$, where $\mathbb{T}_{y,\tau}=[0,2\pi)$. We let
\[X:=L^2(\mathbb{T}_\tau),\qquad Y:=H^1(\mathbb{T}_\tau).\]
We then define the exponentially weighted $L^2$-space
\begin{equation}
\mathcal{X}:=L_\eta^2(\mathbb{R}\times\mathbb{T}_y,X)=\{v:\mathbb{R}\times\mathbb{T}_y\rightarrow X\,\,|\,\,\|v\|^2_{2,\eta}<\infty\}\nonumber
\end{equation}
with weighted norm 
\begin{equation}
    \|v\|^2_{2,\eta}:=\int_{\mathbb{R}\times \mathbb{T}_y}\| e^{\eta\langle x\rangle}v(x,y,\cdot)\|_{X}^2dxdy,\qquad  \langle x\rangle=\sqrt{1+x^2}.
\end{equation}
Given the following inner product, $\mathcal{X}$ becomes a Hilbert space:
\[\langle u,v\rangle_\mathcal{X}:=\frac{1}{4\pi^2}\int_0^{2\pi}\int_0^{2\pi}\int_{-\infty}^\infty u(x,y,\tau)\overline{v(x,y,\tau)}e^{2\eta\langle x\rangle}dxdy d\tau.\]

We can similarly define Sobolev spaces $H_\eta^k$ as 
\begin{equation}
    H_\eta^k(\mathbb{R}\times\mathbb{T},X)=\{v:\mathbb{R}\times\mathbb{T}\rightarrow X\,\,|\,\,\|D^\alpha v\|^2_{2,\eta}<\infty,\,\, |\alpha|\leq k\}.\nonumber
\end{equation}
Finally, we define the Banach space 
\[
\mathcal{Y}:=L^2_\eta(\mathbb{R}\times\mathbb{T}_y,Y)\cap H_\eta^4(\mathbb{R}\times\mathbb{T}_y,X),
\]
with norm \begin{equation}\|u\|^2_\mathcal{Y}:=\int_0^{2\pi}\int_{-\infty}^\infty \|u(x,y,\cdot)\|_Y^2+\sum_{|\alpha|\leq4}\|D^\alpha u(x,y,\cdot)\|^2_Xdxdy.\nonumber\end{equation}

\paragraph{Abstract nonlinear equation}

We consider perturbations  $u=u_*+v$ of the front solution $u_*(x)$ in \eqref{comoving_inhom} at the parameters $(\omega,c) = (\omega_*,c_*)$.   Defining $\Tilde{\omega}=\omega-\omega_*,\, \, \Tilde{c}=c-c_*$, and $\Omega=(\Tilde{\omega},\Tilde{c})$,  and subtracting off $v$-independent parts, we obtain: 
\begin{equation}
0 = (\tilde \omega + \omega_*)\partial_\tau v+\Delta_k(\Delta_k v+g(x,v;u_*))-(\tilde c + c_*)\partial_xv =: \mathcal{F}(v; \Omega)
\end{equation}
where $g(x,v;u_*):=f(x,u_*+v)-f(x,u_*)$. This defines a locally smooth mapping $\mathcal{F}:\mathcal{Y}\times\mathbb{R}^2\to\mathcal{X}$ with $(0;0,0)$ corresponding to the base front solution $u_*$ and general zeros $(v;\Omega)$ of $\mathcal{F}$ corresponding to  $y,\tau$-periodic solutions of \eqref{comoving_inhom}. The smoothness of $\mathcal{F}$ in the $v$ variable is dependent on the smoothness of $f$ in the same variable.  By Hypothesis \ref{hyp1} we have that $f$ is smooth in $v$, and hence so is $\mathcal{F}$. 

\subsection{Linear Properties}
Linearizing $\mathcal{F}$ at the front solution $(v;\tilde{\omega},\tilde{c})=(0;0,0)$, we obtain the following closed and densely defined linear operator
\begin{align}
\mathcal{L}:\mathcal{Y}\subset\mathcal{X}&\to\mathcal{X},\nonumber\\
v&\mapsto\omega_*\partial_\tau v+\Delta_k(\Delta_k v+\partial_uf(x,u_*)v)-c_*\partial_xv.\label{linearized}
\end{align}
We can further define the (formal) $L^2$-adjoint to be
\[\mathcal{L}^*=-\omega_*\partial_\tau+(\Delta_k+\partial_uf(x,u_*))\Delta_k+c_*\partial_x.\]
By restricting its co-domain, the operator $\mathcal{L}$ has the following Fredholm properties.
\begin{Proposition}\label{pfred}
    Let $\mathring{\mathcal{X}}:=\{u\in\mathcal{X}|\langle u,e^{-2\eta\langle x\rangle}\rangle_\mathcal{X}=0\}$.  Then for $\eta>0$ small, $\mathcal{L}:\mathcal{Y}\to\mathring{\mathcal{X}}$ is Fredholm of index 0, with four dimensional kernel.
\end{Proposition}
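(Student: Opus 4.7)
The plan is to decompose $\mathcal{L}$ via Fourier series in $\tau$, reducing the problem to a countable family of operators in $(x,y)$, and to match the Fredholm-index shift produced by the exponential weight $e^{\eta\langle x\rangle}$ against the one-dimensional obstruction removed by the co-domain restriction $\mathring{\mathcal{X}}$. Since the coefficients of $\mathcal{L}$ are independent of $\tau$, we can write $\mathcal{L}=\bigoplus_{m\in\mathbb{Z}} L_m$, where $L_m := L+im\omega_*$ acts on the $m$-th $\tau$-Fourier mode in $L^2_\eta(\mathbb{R}\times\mathbb{T}_y)$. Thus $\mathcal{L}$ will be Fredholm of index $0$ with four-dimensional kernel provided each $L_m$ is Fredholm, the indices sum to zero (after the co-domain restriction), and the kernel dimensions sum to four.

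For each $L_m$, Hypothesis \ref{hyp1} and the exponential convergence in Hypothesis \ref{hyp2} imply that $L_m$ is an exponentially small perturbation at $x=\pm\infty$ of the constant-coefficient asymptotic operator $L_\pm+im\omega_*$. Standard Palmer-type exponential-dichotomy theory then tells us that the Fredholm properties of $L_m$ on $L^2_\eta$ are controlled by the hyperbolicity of these asymptotic operators after the Fourier-symbol shift $\xi\mapsto\xi-i\eta$. A direct computation of the asymptotic symbol (Fourier in $y$ with mode $n$ and in $x$ with frequency $\xi$) reveals a root on the imaginary axis only at $(m,n)=(0,0)$, coming from the $\Delta_k$ prefactor that encodes mass conservation; for all other $(m,n)$, Hypotheses \ref{hyp5} and \ref{hyp6} guarantee hyperbolicity on the imaginary axis, and this persists under small $\eta$ perturbation.

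Assembling the Fourier modes: for $|m|\geq 2$ we obtain invertibility of $L_m$ on $L^2_\eta$ for $\eta>0$ small, by combining Hypothesis \ref{hyp6} with the uniform gap from the essential spectrum; for $m=\pm 1$ the kernel is two-dimensional, spanned by (appropriate conjugates of) the Hopf eigenfunctions $e^{\pm iy}p(x)$, which lie automatically in $L^2_\eta$ by exponential decay coming from hyperbolicity of the asymptotic fourth-order ODE. At $m=0$, Hypothesis \ref{hyp3} rules out any point kernel in the unweighted setting, while the mass-conservation branch of essential spectrum crosses the imaginary axis as $\eta$ is turned on, lowering the Fredholm index of $L_0$ by one. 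The resulting one-dimensional cokernel is spanned, to leading order in $\eta$, by the functional $\langle\,\cdot\,,e^{-2\eta\langle x\rangle}\rangle_\mathcal{X}$, which is exactly the annihilating functional of $\mathring{\mathcal{X}}$. Restricting the codomain to $\mathring{\mathcal{X}}$ therefore compensates the index deficit at $m=0$, yielding trivial kernel there and total Fredholm index zero, with the four-dimensional kernel coming entirely from $m=\pm1$.

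The main obstacle will be the $m=0$ case: making rigorous the identification of the weight-induced one-dimensional cokernel with the functional $\langle\,\cdot\,,e^{-2\eta\langle x\rangle}\rangle_\mathcal{X}$. The underlying reason it should work is the divergence structure of $L$ at mode $(m,n)=(0,0)$: for $v$ of sufficient decay, $\langle L v,1\rangle_{L^2}=0$, which is the exact mass-conservation identity for Cahn--Hilliard. I would make this precise either by a direct fourth-order ODE analysis at the asymptotic operator (whose characteristic polynomial at $\lambda=0$, $\xi=0$ has a simple root producing one exponentially growing adjoint mode that generates the weighted-space cokernel), or by invoking the analogous one-dimensional construction carried out in detail in \cite{Goh}, whose Fredholm-bordering argument transfers directly to the present setting after the $\tau,y$-Fourier decomposition.
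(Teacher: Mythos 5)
Your proposal is correct in outline and reaches the statement by the same global bookkeeping as the paper (Fourier decomposition, a four-dimensional kernel from the $\ell_\tau=\pm1$, $\ell_y=\pm1$ Hopf modes, invertibility of the non-resonant modes via Hypotheses \ref{hyp3}, \ref{hyp5}, \ref{hyp6}, and an index deficit of one at the $\tau,y$-independent mode that is exactly compensated by restricting the codomain to $\mathring{\mathcal{X}}$), but the technical route differs in two places. For Fredholmness itself the paper does not argue mode-by-mode with Palmer-type exponential dichotomies; it proves an a priori estimate $\|\xi\|_\mathcal{Y}\leq C(\|\xi\|_{\mathcal{X}(J)}+\|\mathcal{L}\xi\|_\mathcal{X})$ for the full operator (constant-coefficient Fourier estimates at $x=\pm\infty$ plus a patching argument), applies an abstract closed range lemma to $\mathcal{L}$ and to the conjugated adjoint $\mathcal{L}^*_\eta$, and only then decomposes in $\tau,y$ to compute the index; this sidesteps a point your modewise assembly quietly needs, namely uniform (in the mode index) invertibility bounds for $|m|\ge 2$ so that the countable direct sum is again Fredholm. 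For the zero mode, the paper computes $\mathrm{ind}\,\mathcal{L}_{0,0}=-1$ by factoring $\mathcal{L}_{0,0}=-\partial_x\circ\widetilde L$ and counting roots of $\nu^3+f'_\pm(u_\pm)\nu-c=0$ (two positive, one negative at both ends), rather than tracking the weight-shifted zero root of the symbol; and instead of identifying the cokernel functional it simply observes that $\langle\mathcal{L}u,e^{-2\eta\langle x\rangle}\rangle_\mathcal{X}=\int\mathcal{L}u\,dx\,dy\,d\tau=0$, so the range lies in $\mathring{\mathcal{X}}$, and composes with the index-one projection $\mathcal{P}:\mathcal{X}\to\mathring{\mathcal{X}}$. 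On that last point your hedge ``to leading order in $\eta$'' is unnecessary and slightly weakens the argument: because the weight in the pairing cancels, the functional $u\mapsto\langle u,e^{-2\eta\langle x\rangle}\rangle_\mathcal{X}$ annihilates the range \emph{exactly} by the divergence structure you cite, and since Hypothesis \ref{hyp3} forces a trivial kernel at the $(0,0)$ mode, this exact identity already shows the one-dimensional cokernel there is spanned by it; this exactness is what makes the codomain restriction clean. With that adjustment, and with the direction of the index shift pinned down by the asymptotic root count (or by your proposed fourth-order ODE analysis of the simple zero root), your argument closes; it buys a more geometric, spatial-dynamics picture of each mode at the cost of reassembly estimates, whereas the paper's estimate-first approach handles all modes at once.
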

We leave the proof of this proposition to Appendix \ref{app1}.  The general scheme is to decompose the space $\mathcal{X}$ into various Fourier subspaces, study the Fredholm properties on each, and then use Fredholm algebra to determine the index of the full operator. We find on all but one subspace that $\mathcal{L}$ is a Fredholm operator with Fredholm index 0, while on the $t,y$-independent subspace $\mathcal{L}$ has index $-1$ when considered as a mapping into $\mathcal{X}$.  Restricting the codomain to $\mathring{\mathcal{X}}$, which projects off constants, then yields an index 0 operator. We find the kernel of $\mathcal{L}$ lies in the subspaces spanned by the base modes of the form $e^{\pm iy}e^{\pm i \tau}$.

As we wish to perform a Lyapunov-Schmidt reduction on $\mathcal{F}$, the first step is to decompose the domain and codomain of $\mathcal{L}$. Recall that $e^{iy}p(x), e^{-iy}\overline{p}(x)$, along with their $y$-reflections, give Hopf eigenfunctions of the linear operator $Lv=-\Delta_k(\Delta_k v+\partial_uf(x,u_*)v)+c_*\partial_xv$.  We then define 
\begin{align}
&P_+=e^{i(\tau+y)}p(x),\quad P_-=\overline{P_+},\nonumber\\
&Q_+=e^{i(\tau-y)}p(x),\quad Q_-=\overline{Q_+}.\nonumber
\end{align}
Then, by Hypotheses \ref{hyp3} and \ref{hyp5}, $\ker\mathcal{L}=span\{P_+,P_-,Q_+,Q_-\}$, and hence any $u_0\in\ker\mathcal{L}$ is given by \[u_0=aP_++\overline{a}P_-+bQ_++\overline{b}Q_-,\]
with $a,b\in\mathbb{C}$.  We similarly have elements of the $L^2$-adjoint kernel: 
$$
\Psi_+=e^{i(\tau+y)}\psi_+,\quad \Psi_- = \overline{\Psi_+},\qquad \Phi_+=e^{i(\tau-y)}\psi_+,\quad \Phi_- = \overline{\Phi_+}.
$$
  Then we can decompose the domain and codomain of $\mathcal{L}$ as 
  \[
  \mathcal{Y}=\ker\mathcal{L}\oplus M \qquad \mathring{\mathcal{X}}=N\oplus\ker\mathcal{L}^*,
  \]
where $N=(\ker\mathcal{L}^*)^\perp$ and $M=(\ker\mathcal{L})^\perp$.  We then define projections $E:\mathring{\mathcal{X}}\to\ker\mathcal{L}^*$ and $1-E$ with 
\[
E\mathcal{F}=\sum_{i=+,-}\langle\mathcal{F},\Psi_i\rangle_\mathcal{X}\cdot\Psi_i+\langle\mathcal{F},\Phi_i\rangle_\mathcal{X}\cdot\Phi_i.
\]
Finally, for $u_0\in\ker\mathcal{L}$ and $u_h\in M$, we obtain the decomposed system of equations
\begin{align}
E\mathcal{F}(u_0+u_h;\Omega)&=0\label{e:bif}\\
(1-E)\mathcal{F}(u_0+u_h;\Omega)&=0.
\end{align}

 Since $(1-E)\mathcal{F}$ has invertible linearization in $u_h$ at $u_0=0$, the implicit function theorem gives that there exists a smooth mapping $w:\ker\mathcal{L}\times\mathbb{R}^2\to M$ satisfying \[(1-E)\mathcal{F}(u_0+w(u_0;\Omega);\Omega)=0.\]
Furthermore, this solution satisfies $w(0;\Omega)=0.$ We also have
\begin{align}
 0 = \frac{\partial\mathcal{F}}{\partial u_0}\Big|_{(0;0,0)}&=\omega_*\partial_\tau\frac{\partial w}{\partial u_0}+\Delta_k\left(\Delta_k\frac{\partial w}{\partial u_0}+\partial_uf(x,u_*)\frac{\partial w}{\partial u_0}\right)-c_*\partial_x\frac{\partial w}{\partial u_0}=\mathcal{L}\frac{\partial w}{\partial u_0}\nonumber 
\end{align}
and so we have that $\frac{\partial w}{\partial u_0}\in\ker\mathcal{L}$, while $w\in(\ker\mathcal{L})^\perp$.  Hence $\frac{\partial w}{\partial u_0}(0;0,0)$ is both tangent and normal to $w$, and so $\frac{\partial w}{\partial u_0}(0;0,0)=0$.  So we can expand $w$ in terms of the kernel coordinates $a$ and $b$ as \[w(a,\overline{a},b,\overline{b};\Omega)=a^2e^{2i(\tau+y)}\phi_{aa}(x)+|a|^2\phi_{a\overline{a}}(x)+abe^{2i\tau}\phi_{ab}(x)+a\overline{b}e^{2iy}\phi_{a\overline{b}}(x)+\]
\[\overline{a}^2e^{-2i(\tau+y)}\phi_{\overline{aa}}(x)+\overline{a}be^{-2iy}\phi_{\overline{a}b}(x)+\overline{a}\overline{b}e^{-2i\tau}\phi_{\overline{a}\overline{b}}(x)+b^2e^{2i(\tau-y)}\phi_{bb}(x)+|b|^2\phi_{b\overline{b}}(x)+\]
\[\overline{b}^2e^{-2i(\tau-y)}\phi_{\overline{bb}}(x)+\mathcal{O}(u_0^3)\]

Here $\phi_{ij}\in M\subset\mathcal{Y}$ for all $i,j\in\{a,\overline{a},b,\overline{b}\}$.  Rearranging $(1-E)\mathcal{F}=0$, we obtain
\begin{align}
&\mathcal{L}(u_0+w(u_0;\Omega))=-\Delta_k(f(x,u_*+u_0+w(u_0;\Omega))-f(x,u_*)-\partial_uf(x,u_*)(u_0+w(u_0;\Omega)))\nonumber\\
 &=-\Delta_k\left(\frac{1}{2}\partial_u^2f(x,u_*)(u_0+w(u_0;\Omega))^2+\frac{1}{6}\partial_u^3f(x,u_*)(u_0+w(u_0;\Omega))^3+\mathcal{O}(\|u_0\|_\mathcal{Y}^4)\right).
\end{align}
Substituting in the above expansion and projecting onto the different Fourier modes $e^{2i(j\tau+\ell y)}$ with $j,\ell\in\{0,\pm1\}$, we find the following system of equations at quadratic order in $u_0$, after dividing out constants:
\begin{align}
    \mathcal{L}(e^{2i(\tau+y)}\phi_{aa})&=-\Delta_k\left(\frac{1}{2}\partial_u^2f(x,u_*)e^{2i(\tau+y)}p^2(x)\right),\label{phi_a_a}\\
    \mathcal{L}(\phi_{a\overline{a}})&=-\Delta_k\left(\frac{1}{2}\partial_u^2f(x,u_*)|p(x)|^2\right),\label{phi_a_bara}\\
    \mathcal{L}(e^{2i\tau}\phi_{ab})&=-\Delta_k\left(\frac{1}{2}\partial_u^2f(x,u_*)e^{2i\tau}p^2(x)\right),\label{phi_a_b}\\
    \mathcal{L}(e^{2iy}\phi_{a\overline{b}})&=-\Delta_k\left(\frac{1}{2}\partial_u^2f(x,u_*)e^{2iy}|p(x)|^2\right),\label{phi_a_barb}\\
    \mathcal{L}(e^{-2i(\tau+y)}\phi_{\overline{a}\overline{a}})&=-\Delta_k\left(\frac{1}{2}\partial_u^2f(x,u_*)e^{-2i(\tau+y)}\overline{p}^2(x)\right),\label{phi_bara_bara}\\
    \mathcal{L}(e^{-2iy}\phi_{\overline{a}b})&=-\Delta_k\left(\frac{1}{2}\partial_u^2f(x,u_*)e^{-2iy}|p(x)|^2\right),\label{phi_bara_b}\\
    \mathcal{L}(e^{-2i\tau}\phi_{\overline{a}\overline{b}})&=-\Delta_k\left(\frac{1}{2}\partial_u^2f(x,u_*)e^{-2i\tau}\overline{p}^2(x)\right),\label{phi_bara_barb}\\
    \mathcal{L}(e^{2i(\tau-y)}\phi_{bb})&=-\Delta_k\left(\frac{1}{2}\partial_u^2f(x,u_*)e^{2i(\tau-y)}p^2(x)\right),\label{phi_b_b}\\
    \mathcal{L}(\phi_{b\overline{b}})&=-\Delta_k\left(\frac{1}{2}\partial_u^2f(x,u_*)|p(x)|^2\right),\label{phi_b_barb}\\
    \mathcal{L}(e^{-2i(\tau-y)}\phi_{\overline{b}\overline{b}})&=-\Delta_k\left(\frac{1}{2}\partial_u^2f(x,u_*)e^{-2i(\tau-y)}\overline{p}^2(x)\right).\label{phi_barb_barb}
\end{align}

\begin{Lemma}
    Equations \eqref{phi_a_a} to \eqref{phi_barb_barb} can be uniquely solved for the functions $\phi_{ij}\in M$, $i,j\in\{a,\overline{a},b,\overline{b}\}$.
\end{Lemma}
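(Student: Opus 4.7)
The plan is to exploit the $(\tau, y)$-Fourier structure of the operator $\mathcal{L}$. Since the coefficients of $\mathcal{L}$ depend only on $x$, the operator commutes with translations in $\tau$ and $y$ and therefore preserves each Fourier subspace $\mathcal{X}_{j,\ell}$ of functions of the form $e^{i(j\tau+\ell y)}\phi(x)$. The right-hand sides of \eqref{phi_a_a}--\eqref{phi_barb_barb} each lie in a single such subspace at a Fourier index $(j,\ell)$ with $(|j|,|\ell|) \in \{(0,0),(2,0),(0,2),(2,2)\}$, none of which coincide with the Hopf indices $(\pm 1, \pm 1)$ that carry $\ker\mathcal{L}$ and $\ker\mathcal{L}^*$. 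Projecting each equation onto its Fourier subspace therefore reduces the problem to separate inversions of $\mathcal{L}$ restricted to a single $\mathcal{X}_{j,\ell}$, which I would identify with the $x$-dependent operator $i\omega_* j - L_\ell$, where $L_\ell$ is the restriction of $L$ to the $\ell$-th transverse Fourier mode.

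Next, I would verify invertibility of $\mathcal{L}$ on each relevant Fourier subspace using the spectral hypotheses. For subspaces with $j = \pm 2$, the corresponding spectral parameter $i\omega_* j$ lies in $i\omega_*(\mathbb{Z}\setminus\{0,\pm 1\})$; Hypothesis \ref{hyp6} then supplies invertibility of $L - i\omega_* j$ on the unweighted $L^2$ space, and this invertibility transfers to the weighted space $L^2_\eta$ for small $\eta > 0$ by the standard observation that exponential weights shift essential spectrum but not point spectrum. For subspaces with $j = 0$ and $\ell = \pm 2$, the relevant parameter is $\lambda = 0$; by Hypothesis \ref{hyp3} the point $0$ is not in the extended point spectrum of $L$, so $L_{\pm 2}$ has trivial kernel, and the Fredholm analysis underpinning Proposition \ref{pfred} (see Appendix \ref{app1}) yields index $0$ on these non-zero transverse modes, hence invertibility.

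The one delicate case is $(j,\ell) = (0,0)$, where $\mathcal{L}$ into the full $\mathcal{X}$ has index $-1$ owing to the neutral mass-flux, which is precisely why Proposition \ref{pfred} restricts the codomain to $\mathring{\mathcal{X}}$; on $\mathring{\mathcal{X}}$ the operator has index $0$ and trivial kernel at this mode. Here I must additionally verify that the $(0,0)$-mode right-hand sides $-\tfrac{1}{2}\partial_x^2(\partial_u^2 f(x, u_*)|p|^2)$ lie in $\mathring{\mathcal{X}}$, which follows from the exponential decay of $p$ and the smoothness of $f$: the expression inside $\partial_x^2$ decays together with its derivative at $\pm\infty$, so its integral over $\mathbb{R}$ vanishes, giving the required mean-zero condition. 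Assembling the three cases, each $\phi_{ij}$ is uniquely determined in its Fourier subspace and automatically lies in $M = (\ker\mathcal{L})^\perp$ because its Fourier index is orthogonal to those spanning $\ker\mathcal{L}$. The main technical obstacle I anticipate is the transfer of unweighted invertibility from Hypothesis \ref{hyp6} to the weighted space; however, this is essentially built into the Fredholm framework of Appendix \ref{app1} and reduces to a standard perturbative argument for small $\eta > 0$.
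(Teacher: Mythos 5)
Your proposal is correct, but its existence mechanism differs from the paper's. The paper treats $\mathcal{L}:\mathcal{Y}\to\mathring{\mathcal{X}}$ as a single index-zero Fredholm operator (Proposition \ref{pfred}) and simply checks the two resulting solvability conditions: each right-hand side of \eqref{phi_a_a}--\eqref{phi_barb_barb} is of the form $\Delta_k H$ with $H$ exponentially localized, so integration by parts gives $\langle \Delta_k H, e^{-2\eta\langle x\rangle}\rangle_{\mathcal{X}}=\langle \Delta_k H,1\rangle_{L^2}=0$ and the right-hand side lies in $\mathring{\mathcal{X}}$; and since $\ker\mathcal{L}^*$ is spanned by $\Psi_\pm,\Phi_\pm$, whose $\tau$-dependence is $e^{\pm i\tau}$, while the right-hand sides carry $e^{0\cdot i\tau}$ or $e^{\pm2i\tau}$, they are orthogonal to the adjoint kernel and hence in the range, with uniqueness from $\phi_{ij}\in M=(\ker\mathcal{L})^\perp$. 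You instead prove that $\mathcal{L}$ restricted to each relevant Fourier block $(j,\ell)$ is an isomorphism, invoking Hypothesis \ref{hyp6} (transferred to the weighted space for small $\eta>0$) when $j=\pm2$, Hypothesis \ref{hyp3} together with the block index computations of Appendix \ref{app1} when $(j,\ell)=(0,\pm2)$, and the codomain restriction plus your mean-zero check when $(j,\ell)=(0,0)$; your observation that the solutions automatically lie in $M$ because their Fourier indices avoid $(\pm1,\pm1)$, and your $(0,0)$ mean-zero verification, coincide with the paper's corresponding steps. Both routes rest on the same hypotheses; yours yields a bit more (block-wise invertibility, hence solvability with bounds mode by mode), at the cost of the unweighted-to-weighted transfer of Hypothesis \ref{hyp6} and a mode-by-mode index discussion that the paper bypasses by quoting Proposition \ref{pfred} once and then using only the $\tau$-Fourier mismatch with the adjoint kernel, which is the shorter argument.
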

\begin{proof}
First, the right hand side in each of \eqref{phi_a_a} - \eqref{phi_barb_barb} is contained in $\mathring{\mathcal{X}}$. We note each right-hand side takes the form $\Delta_k(H)$ for some function $H$. As $p(x)$ is exponentially localized, so is $H$. Hence, integration by parts gives that $\langle \Delta_k H, e^{-2\eta\langle x \rangle} \rangle_{L^2_\eta} = \langle \Delta_k H, 1\rangle_{L^2} = 0.$ 

Next, we claim that each right hand side of \eqref{phi_a_a} - \eqref{phi_barb_barb} is contained in the range of $\mathcal{L}$.  This is seen by observing that $\ker \mathcal{L}^*$ is spanned by the functions $\Psi_+,\Psi_-,\Phi_+,$ and $\Phi_-$, all of which have $\tau$-dependence of the form $e^{\pm i\tau}$, while the right hand sides of \eqref{phi_a_a} - \eqref{phi_barb_barb} contain $\tau$ dependence of the form $e^{0i\tau}$ or $e^{\pm 2i\tau}$ and hence must lie in $(\ker\mathcal{L}^*)^\perp$. Uniqueness follows from the fact that each $\phi_{ij}\in M=(\ker\mathcal{L})^\perp$. 

\end{proof}

Thus we can solve equations \eqref{phi_a_a}-\eqref{phi_barb_barb} for $\phi_{ij}$, which in turn will give the leading-order expansion of $w(u_0;\Omega)$.  
Plugging this into the bifurcation equation \eqref{e:bif}  we obtain 
\[0=E\mathcal{F}(u_0+w(u_0;\Omega);\Omega) = \sum_{i=+,-}\langle\mathcal{F}(u_0+w(u_0;\Omega);\Omega),\Psi_i\rangle_\mathcal{X}\cdot\Psi_i+\langle\mathcal{F}(u_0+w(u_0;\Omega);\Omega),\Phi_i\rangle_\mathcal{X}\cdot\Phi_i,\]
which, since $\Psi_+,\Psi_-,\Phi_+,$ and $\Phi_-$ are linearly independent, is equivalent to the finite dimensional system of equations
 \begin{align}
    0=\Theta_1(u_0;\Omega):=\langle\mathcal{F}(u_0+w(u_0;\Omega);\Omega),\Psi_+\rangle_\mathcal{X}&\\
0=\Theta_2(u_0;\Omega):=\langle\mathcal{F}(u_0+w(u_0;\Omega);\Omega),\Psi_-\rangle_\mathcal{X}&\\
0=\Theta_3(u_0;\Omega):=\langle\mathcal{F}(u_0+w(u_0;\Omega);\Omega),\Phi_+\rangle_\mathcal{X}&\\
0=\Theta_4(u_0;\Omega):=\langle\mathcal{F}(u_0+w(u_0;\Omega);\Omega),\Phi_-\rangle_\mathcal{X}.&
\end{align}

Then by the definitions of $\Psi_i$ and $\Phi_i$ we have that the only terms which are nonzero in the inner product will be terms of the form $e^{i\ell_\tau\tau}e^{i\ell_yy}$ for $\ell_\tau,\ell_y=\pm1$.  Each of the functions $\Theta_i$ is zero trivially when $a=b=0$, and so we can decompose each equation as $\Theta_i = r_i(a,\overline{a},b,\overline{b};\Omega)\cdot h_i(a,\overline{a},b,\overline{b})$, where $h(0,0,0,0) = 0$. In the coordinates $(a,\bar a, b, \bar{b})\in \mathbb{C}^4$, the reduced equations are equivariant under the following actions induced by the symmetries of the original nonlinear equation: time translation induces the action $(a,b)\mapsto (e^{i \theta} a, e^{i \theta} b),\quad \theta\in[0,2\pi)$, $y$-translation induces $(a,b)\mapsto (e^{i \phi} a, e^{-i \phi} b), \phi\in[0,2\pi)$, while $y$-reflection induces $(a,b)\mapsto (b,a)$.  Under these symmetries, we readily conclude that $r_i$ can be written as a function of $|a|^2, |b|^2$ and $\Omega$.  Because of the inner product with the adjoint kernel elements $\Psi_+,\Psi_-,\Phi_+$, and $\Phi_-$, each $h_i$ will be linear in exactly one of the kernel coordinates, and will not depend on any of the others.  Thus, possibly after redefinition by a constant, we can write
\[\Theta_1=r_1(|a|^2,|b|^2;\Tilde{\omega},\Tilde{c})a\]
\[\Theta_2=r_2(|a|^2,|b|^2;\Tilde{\omega},\Tilde{c})\overline{a}\]
\[\Theta_3=r_3(|a|^2,|b|^2;\Tilde{\omega},\Tilde{c})b\]
\[\Theta_4=r_4(|a|^2,|b|^2;\Tilde{\omega},\Tilde{c})\overline{b}.\]
We note that expansions of each $r_i$ are determined by taking derivatives of $\mathcal{F}$ and taking inner products with the appropriate $\Psi_i$ or $\Phi_i$.  We consider these in cases by the signs of $\ell_\tau$ and $\ell_y$, in the following way:

In the case $\ell_\tau = \ell_y = 1$, we must consider $\langle\mathcal{F},e^{i(\tau+y)}\psi_+\rangle_\mathcal{X}$, and so the only nonzero terms in the inner product will be terms with the mode $e^{i(\tau+y)}$ by the orthogonality of the exponential.

\textit{To find $\partial_{\Tilde{c}}r_1$:}  We consider $\mathcal{F}_{a\Tilde{c}}$, and we will then take the appropriate inner product, finding
\begin{align}
\mathcal{F}_{a\Tilde{c}}|_0&=\Bigg[(\Tilde{c}+c_*)\partial_x\Bigg(ae^{i(\tau+y)}p(x)+\overline{a}e^{-i(\tau+y)}\overline{p}(x)\nonumber\\
&\qquad\qquad +be^{i(\tau-y)}p(x)+\overline{b}e^{-i(\tau-y)}\overline{p}(x)+w(a,\overline{a},b,\overline{b};\Omega)\Bigg)\Bigg]_{a\Tilde{c}} = e^{i(\tau+y)}p'(x).
\end{align}
So $\partial_{\Tilde{c}}r_1(0)=\langle e^{i(\tau+y)}p'(x),e^{i(\tau+y)}\psi_+\rangle_\mathcal{X}=\lambda_{\Tilde{c}}(0).$

\textit{To find $\partial_{\Tilde{\omega}}r_1$:}  We consider $\mathcal{F}_{a\Tilde{\omega}}$ and take the appropriate inner product, finding
\begin{align}
\mathcal{F}_{a\Tilde{\omega}}|_0&=\Bigg[(\Tilde{\omega}+\omega_*)\Bigg(ae^{i(\tau+y)}p(x)+\overline{a}e^{-i(\tau+y)}\overline{p}(x)+be^{i(\tau-y)}p(x)+\nonumber\\
&\qquad\qquad+\overline{b}e^{-i(\tau-y)}\overline{p}(x)+w(a,\overline{a},b,\overline{b};\Omega)\Bigg)_\tau\Bigg]_{a\Tilde{\omega}}=ie^{i(\tau+y)}p(x),
\end{align}
and so $\partial_{\Tilde{\omega}}r_1(0)=\langle ie^{i(\tau+y)}p(x),e^{i(\tau+y)}\psi_+\rangle_\mathcal{X}=i$.

\textit{To find $\partial_{|a|^2}r_1$:}  We must consider all the different ways one can produce nonzero terms of the correct mode, $e^{i(\tau+y)}$.  From the right-hand sides of the equations defining the $\phi_{ij}$'s, this can only occur from terms of the form $u_0^3$ or $u_0\cdot w(u_0;\Omega)$.  By considering these terms and taking the appropriate inner products, we will find that
\[\partial_{|a|^2}r_1=\left\langle-\Delta_k\left({3\choose 2,1}\frac{1}{6}\partial_u^3f(x,u_*)e^{i(\tau+y)}p^2(x)\overline{p}(x)+\right.\right.\]\[\left.\left.+{2\choose 1,1}\frac{1}{2}\partial_u^2f(x,u_*)e^{i(\tau+y)}p(x)\phi_{a\overline{a}}+{2\choose 1,1}\frac{1}{2}\partial_u^2f(x,u_*)e^{i(\tau+y)}\overline{p}(x)\phi_{aa}\right),e^{i(\tau+y)}\psi_+\right\rangle_\mathcal{X}=\]
\[=\left\langle-\Delta_k\left(e^{i(\tau+y)}\frac{1}{2}\partial_u^3f(x,u_*)p^2\overline{p}+e^{i(\tau+y)}\partial_u^2f(x,u_*)[p\phi_{a\overline{a}}+\overline{p}\phi_{aa}]\right),e^{i(\tau+y)}\psi_+\right\rangle_\mathcal{X}+h.o.t.\]

where ${N\choose n_1...n_k}=\frac{N!}{n_1!\cdots n_k!}$.

\textit{To find $\partial_{|b|^2}r_1$:}  Similar to how we found $\partial_{|a|^2}r_1$, we look for nonzero terms with the correct mode, which will only occur in the right-hand sides from $u_0^3$ or $u_0\cdot w(u_0;\Omega)$.  By taking the appropriate inner products of these terms and rewriting the Laplacian as $\Delta_k=\partial_x^2-k^2$ so that the term $e^{i(\tau+y)}$ commutes with the Laplacian, we find
\[\partial_{|b|^2}r_1=\left\langle-\Delta_k\left(\partial_u^3f(x,u_*)p^2\overline{p}+\partial_u^2f(x,u_*)[p\phi_{b\overline{b}}+p\phi_{a\overline{b}}+\overline{p}\phi_{ab}]\right),\psi_+\right\rangle_{L^2}+h.o.t.\]

Thus in total we have (with $\Delta_k=\partial_x^2-k^2$) 
\begin{align}
\Theta_1&=(\lambda_{\Tilde{c}}(0)\Tilde{c}+i\Tilde{\omega})a+a|a|^2\left\langle-\Delta_k\left(\frac{1}{2}\partial_u^3f(x,u_*)p^2\overline{p}+\partial_u^2f(x,u_*)[p\phi_{a\overline{a}}+\overline{p}\phi_{aa}]\right),\psi_+\right\rangle_{L^2}\nonumber\\
&+a|b|^2\left\langle-\Delta_k\left(\partial_u^3f(x,u_*)p^2\overline{p}+\partial_u^2f(x,u_*)[p\phi_{b\overline{b}}+p\phi_{a\overline{b}}+\overline{p}\phi_{ab}]\right),\psi_+\right\rangle_{L^2}+h.o.t.
\end{align}

Employing similar computations for the subspaces $\ell_\tau=\ell_y=-1$, $\ell_\tau=-\ell_y=1$, and $-\ell_\tau=\ell_y=1$ we respectively find
\begin{align}
\Theta_2&=\overline{\Theta_1} = (\overline{\lambda}_{\Tilde{c}}(0)\Tilde{c}-i\Tilde{\omega})\overline{a}
+\overline{a}|a|^2\left\langle-\Delta_k\left(\frac{1}{2}\partial_u^3f(x,u_*)\overline{p}^2p+\partial_u^2f(x,u_*)[\overline{p}\phi_{a\overline{a}}+p\phi_{\overline{a}\overline{a}}]\right),\psi_-\right\rangle_{L^2}\nonumber\\
&+\overline{a}|b|^2\left\langle-\Delta_k\left(\partial_u^3f(x,u_*)\overline{p}^2p+\partial_u^2f(x,u_*)[\overline{p}\phi_{b\overline{b}}+p\phi_{\overline{a}\overline{b}}+\overline{p}\phi_{\overline{a}b}]\right),\psi_-\right\rangle_{L^2}+h.o.t.\\
\Theta_3&=(\lambda_{\Tilde{c}}(0)\Tilde{c}+i\Tilde{\omega})b
+b|a|^2\left\langle-\Delta_k\left(\partial_u^3f(x,u_*)p^2\overline{p}+\partial_u^2f(x,u_*)[p\phi_{a\overline{a}}+p\phi_{\overline{a}b}+\overline{p}\phi_{ab}]\right),\psi_+\right\rangle_{L^2}\nonumber\\
&+b|b|^2\left\langle-\Delta_k\left(\frac{1}{2}\partial_u^3f(x,u_*)p^2\overline{p}+\partial_u^2f(x,u_*)[p\phi_{b\overline{b}}+\overline{p}\phi_{bb}]\right),\psi_+\right\rangle_{L^2}+h.o.t.\label{e:TH3}\\
\Theta_4&=\overline{\Theta_3} = (\overline{\lambda}_{\Tilde{c}}(0)\Tilde{c}-i\Tilde{\omega})\overline{b}
+\overline{b}|a|^2\left\langle-\Delta_k\left(\partial_u^3f(x,u_*)\overline{p}^2p+\partial_u^2f(x,u_*)[p\phi_{a\overline{a}}+p\phi_{\overline{a}\overline{b}}+\overline{p}\phi_{a\overline{b}}]\right),\psi_+\right\rangle_{L^2}\nonumber\\
&+\overline{b}|b|^2\left\langle-\Delta_k\left(\frac{1}{2}\partial_u^3f(x,u_*)\overline{p}^2p+\partial_u^2f(x,u_*)[p\phi_{\overline{b}\overline{b}}+\overline{p}\phi_{b\overline{b}}]\right),\psi_+\right\rangle_{L^2}+h.o.t.
\end{align}

\subsection{Reduced Equations}
Following \cite[Ch. 17, Prop. 2.1]{Golubitsky}, we suppress the complex conjugate equations and put our bifurcation equations in the form\begin{equation}
    \begin{pmatrix}
        \Theta_1(a,b)\\\Theta_3(a,b)
    \end{pmatrix}=(p+iq)\begin{pmatrix}
        a\\b
    \end{pmatrix}+(r+is)\delta\begin{pmatrix}
        a\\-b
    \end{pmatrix},\label{eq:normal_form}
\end{equation}
where $p,q,r,s$ are polynomials in the variables $N=|a|^2+|b|^2$ and $D=\delta^2=(|b|^2-|a|^2)^2$.  This will allow us to determine the direction of the bifurcation for both the standing and rotating waves, whether supercritical or subcritical.  We have
\[\Theta_1(a,b)=(\lambda_{\Tilde{c}}(0)\Tilde{c}+i\Tilde{\omega})a+\theta_1 a|a|^2+\theta_2 a |b|^2+\mathcal{O}(|u_0|^4+|u_0|^2\cdot |w|+|w|^2),\]
with 
\begin{align}
    \theta_1=\left\langle-\Delta_k\left(\frac{1}{2}\partial_u^3f(x,u_*)p^2\overline{p}+\partial_u^2f(x,u_*)[p\phi_{a\overline{a}}+\overline{p}\phi_{aa}]\right),\psi_+\right\rangle_{L^2},\\
    \theta_2=\langle-\Delta_k(\partial_u^3f(x,u_*)p^2\overline{p}+\partial_u^2f(x,u_*)[p\phi_{b\overline{b}}+p\phi_{a\overline{b}}+\overline{p}\phi_{ab}]),\psi_+\rangle_{L^2}.
\end{align}

  Since we can write $|a|^2=\frac{1}{2}N-\frac{1}{2}\delta$ and $|b|^2=\frac{1}{2}N+\frac{1}{2}\delta$, this gives us the leading order form \[\Theta_1\approx a\left[\lambda_{\Tilde{c}}(0)\Tilde{c}+i\Tilde{\omega}+\frac{\theta_1+\theta_2}{2}N+\frac{\theta_2-\theta_1}{2}\delta\right].\]

Using this, we can deduce the leading order forms of the polynomials $p,q,r,$ and $s$ as
\begin{align}
    p\approx\mu_{\Tilde{c}}(0)\Tilde{c}+\mathrm{Re}\,\left(\frac{\theta_1+\theta_2}{2}\right)N,\\
    q\approx\Tilde{\omega}+\kappa_{\Tilde{c}}(0)\Tilde{c}+\mathrm{Im}\left(\frac{\theta_1+\theta_2}{2}\right)N,\\
    r\approx\mathrm{Re}\,\left(\frac{\theta_2-\theta_1}{2}\right),\\
    s\approx\mathrm{Im}\left(\frac{\theta_2-\theta_1}{2}\right).
\end{align}

However this selection of $p,q,r,$ and $s$ must also hold for $\Theta_3$.  $\Theta_3$ can be written similarly to $\Theta_1$ as \[\Theta_3\approx b\left[\lambda_{\Tilde{c}}(0)\Tilde{c}+i\Tilde{\omega}+\eta_1|a|^2+\eta_2|b|^2\right]=\]
\[=b\left[\lambda_{\Tilde{c}}(0)\Tilde{c}+i\Tilde{\omega}+\frac{\eta_1+\eta_2}{2}N+\frac{\eta_1-\eta_2}{2}\delta\right],\]
with $\eta_1$ and $\eta_2$ being the appropriate inner products from $\Theta_3$ in \eqref{e:TH3}. We can perform similar assignments as above to get forms for $p,q,r,$ and $s$ in terms of $\eta_1$ and $\eta_2$.

We claim that $\phi_{a\overline{a}}=\phi_{b\overline{b}}$, $\phi_{a\overline{b}}=\phi_{\overline{a}b}$, and $\phi_{aa}=\phi_{bb}$.  Then it follows that $\eta_2=\theta_1$ and $\eta_1 = \theta_2$ and hence that $\theta_1+\theta_2=\eta_1+\eta_2$ and $\theta_2-\theta_1=\eta_1-\eta_2$, which in turn shows that $\begin{pmatrix}\Theta_1\\\Theta_3\end{pmatrix}$ can be written in the form \eqref{eq:normal_form}.  We show the claimed equalities next.

\subsubsection{$\phi_{a\overline{a}}=\phi_{b\overline{b}}$}

Returning to equations \eqref{phi_a_bara} and \eqref{phi_b_barb}, we note that $\phi_{a\overline{a}}$ and $\phi_{b\overline{b}}$ solve the same equation and hence are equal since the equations can be solved uniquely.

\subsubsection{$\phi_{a\overline{b}}=\phi_{\overline{a}b}$}

Here we compare \eqref{phi_a_barb} and \eqref{phi_bara_b}. Recall we have $\mathcal{L}v=\omega_*\partial_\tau v+\Delta_k(\Delta_k v+\partial_uf(x,u_*)v)-c_*\partial_xv$.  From \eqref{phi_a_barb} we have
\[\Delta_k(\Delta_ke^{2iy}\phi_{a\overline{b}}+\partial_uf(x,u_*)e^{2iy}\phi_{a\overline{b}})-c_*\partial_xe^{2iy}\phi_{a\overline{b}}=\]\[=-\Delta_k\left(\frac{1}{2}\partial_u^2f(x,u_*)e^{2iy}|p(x)|^2\right).\]

We then note that because we have functions $e^{2iy}\phi_{a\overline{b}}$ and $e^{2iy}|p(x)|^2$, the Laplacian becomes $\Delta_k=\partial_x^2+k^2\partial_y^2=\partial_x^2-4k^2$.  Dividing by $e^{2iy}$, this leaves us with
\[(\partial_x^2-4k^2)((\partial_x^2-4k^2)\phi_{a\overline{b}}+\partial_uf(x,u_*)\phi_{a\overline{b}})-c_*\partial_x\phi_{a\overline{b}}=\]
\[=\frac{1}{2}(4k^2-\partial_x^2)(\partial_u^2f(x,u_*)|p(x)|^2).\]

For \eqref{phi_bara_b}, note that again the Laplacian becomes $\Delta_k=\partial_x^2-4k^2$ because of the form of the functions.  Evaluating the $y$ dependence and dividing by $e^{-2iy}$, we find:
\[(\partial_x^2-4k^2)((\partial_x^2-4k^2)\phi_{\overline{a}b}+\partial_uf(x,u_*)\phi_{\overline{a}b})-c_*\partial_x\phi_{\overline{a}b}=\]
\[=-\frac{1}{2}(\partial_x^2-4k^2)(\partial_u^2f(x,u_*)|p(x)|^2)=\frac{1}{2}(4k^2-\partial_x^2)(\partial_u^2f(x,u_*)|p(x)|^2).\]

then we can see that $\phi_{a\overline{b}}$ and $\phi_{\overline{a}b}$ solve the same equation.  Thus they are equal by the same reasoning as before.

\subsubsection{$\phi_{aa}=\phi_{bb}$}

This follows a similar procedure as the previous case, though now there is dependence on $\tau$.  We can show that $\phi_{aa}$ and $\phi_{bb}$ solve the same equation, and thus must be equal.  Thus we have shown that $\eta_2=\theta_1$ and $\theta_2=\eta_1$, and so we can assign the polynomials $p,q,r,$ and $s$ as desired.

\subsection{Bifurcations}

With our bifurcation equation in the desired form, we can now conclude the existence of the desired solutions and determine the bifurcation directions from the front solution $u_*$.  The results of  \cite[Ch. 17]{Golubitsky} readily give the existence of the pair of bifurcating solution branches, solving for $\Omega$ as a function of the solution amplitude. Furthermore, all that is required to determine the direction of bifurcation is $\frac{\partial p}{\partial N}(0)$ and $r(0)$, where $p$ and $r$ are the polynomials from the normal form.  Straightforward calculation gives
 \[
 p_N(0)=\mathrm{Re}\,\left(\frac{\theta_1+\theta_2}{2}\right)=:\alpha,\qquad 
 r(0)=\mathrm{Re}\,\left(\frac{\theta_2-\theta_1}{2}\right)=:\beta.
 \]

Using these, we can then determine the direction of bifurcation of the two branches of solutions.  If $\alpha<0$ and $\beta>0$, then both solution branches bifurcate as $c$ moves below $c_*$, which we refer to as a \textbf{type 1} bifurcation.  In contrast, if $\alpha>0$ and $\beta<0$, then both branches bifurcate to the right, which we will call a \textbf{type 3} bifurcation.

If $\alpha,\beta>0$, then we must consider two cases: $\alpha<\beta$ and $\alpha>\beta$.  If $\alpha>\beta$, then both branches bifurcate to the right, a \textbf{type 3} bifurcation.  If $\alpha<\beta$, then rotating waves, corresponding to oblique stripes, bifurcate to the left, and standing waves, corresponding to checkerboard patterns, to the right, a \textbf{type 2} bifurcation.

If $\alpha,\beta<0$, then we again must consider two cases: $\alpha<\beta$ and $\alpha>\beta$.  If $\alpha<\beta$, then both branches bifurcate to the left, a \textbf{type 1} bifurcation.  If $\alpha>\beta$, the standing waves bifurcate to the left, and rotating waves to the right, a \textbf{type 4} bifurcation.

\begin{figure}[htbp]
    \centering
    \includegraphics[scale=0.6]{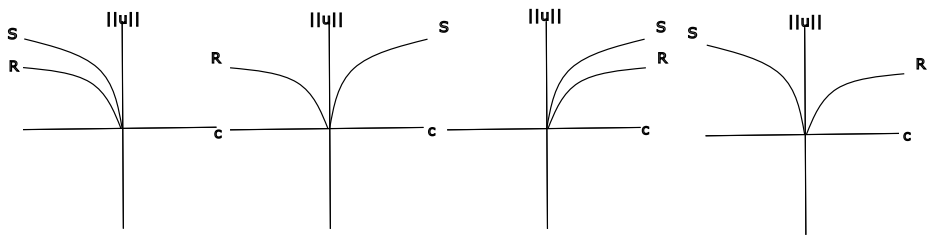}
    \caption{Here we see examples of all the different bifurcations possible: Type 1 (left), Type 2 (second), Type 3 (third), and Type 4 (right).  S represents the branch of standing waves (checkerboard patterns), and R denotes the rotating waves (oblique stripe patterns).  Here $c$ is our bifurcation parameter, the quench speed, and $\|u\|$ is an $L^2$ norm.}
    \label{fig:bif}
\end{figure}

\subsection{Leading Order Forms of the Solutions}
Using the decomposition $v = u_0 + w$, and the fact that $w$ is higher order, we obtain the following expansion for the full patterned front solution
\[u= u_*+(aP_++\overline{a}P_-+bQ_++\overline{b}Q_-)+O(|a|^2+|b|^2).\]

Following \cite[Ch. 17, Sec. 3]{Golubitsky}, in order to find zeros of the normal form \eqref{eq:normal_form} it suffices to take the real representatives from the $O(2)\times S^1$-orbit of solutions and thus restrict to $a,b\in \mathbb{R}$. 
Here rotating waves correspond to $a>b=0$, arising from the isotropy subgroup $\widetilde{SO}(2)=\{(\theta,\theta)\,\,|\,\,\theta\in S^1\}$, while standing waves correspond to $a=b>0$, arising from the isotropy subgroup $\mathbb{Z}_2\oplus\mathbb{Z}_2^c$. Using this information, we can determine the particular leading order form for both the rotating and standing waves, again which correspond to oblique and  checkerboard patterns.  This gives the solution forms
\begin{align}
u_{os}&=u_*+a(P_++P_-)+O(|a|^2)\nonumber\\
&=u_*+2a\mathrm{Re}\,\left(e^{i(\tau+y)}p(x)\right)+O(|a|^2),\\
u_{cb}&=u_*+\left(aP_+ + aP_-+aQ_+ +aQ_-\right)+O(|a|^2)\nonumber\\
&=u_*+4a\cos(y)\mathrm{Re}\,\left(e^{i\tau}p(x)\right)+O(|a|^2).
\end{align}
The expansions for the bifurcation parameter in terms of the amplitude, Equations \eqref{c_os} and \eqref{c_cb}, come from \cite[Ch. 17, Sec. 3]{Golubitsky}.  This concludes the proof of Theorem \ref{thm1}.

\section{Example}\label{s:ex}
In this section, we lay out an explicit example of a nonlinearity and heterogeneity which give rise to such bifurcations.  We will show using both rigorous and numerical evidence that these satisfy our hypotheses in Section \ref{sec3.1}, and we numerically determine the direction of the bifurcation in Section \ref{sec3.2}.  In Section \ref{sec3.3} we examine what happens as we vary the transverse wavenumber $k$.

\subsection{Nonlinearity and Heterogeneity}\label{sec3.1}
We fix $k=\frac{1}{2}$ and define the nonlinearity 
\begin{equation}\label{e:fex}
	f(x-ct,u)=h(x-ct)u+\gamma u^3-u^5,
\end{equation}
with top-hat heterogeneity 
\begin{equation}\label{e:hex}
h(\Tilde{x})=\tanh(\delta(\Tilde{x}-K))\tanh(-\delta(\Tilde{x}+K)),
\end{equation}
and $\delta\gg1$ and $K$ large.  In our numerics, described below, we set $\delta=5$ and $K=10\pi$. To understand this nonlinearity, we briefly discuss free invasion fronts in the unquenched, homogeneous coefficient nonlinearity \begin{equation}\Tilde{f}(u)=u+\gamma u^3-u^5.\end{equation} 
In the regime $\gamma<0$, invasion fronts into the unstable state $u\equiv0$ are determined by linear information of the state ahead of the front, and are known as \emph{pulled} fronts. In the regime $\gamma>1$, fronts are governed by the strong nonlinear growth and travel faster than the linear information ahead of the front predicts, commonly referred to as \emph{pushed} fronts. An example of such behavior can be found in \cite{goh2016pattern}, and more detail can be found in \cite[Sec. 2.6]{van2003front}.

We now verify that equation \eqref{comoving_inhom} with nonlinearity \eqref{e:fex} satisfies our hypotheses.  Clearly $f$ is smooth in both $x$ and $u$ and, as $x\to\pm\infty$, $f_\pm(u)=-u+\gamma u^3-u^5$ with the appropriate convergence rate.  Thus Hypothesis \ref{hyp1} is satisfied.

For Hypothesis \ref{hyp2}, our primary front will be the trivial solution $u_*\equiv0$.

For the spectral assumptions, Hypotheses \ref{hyp3} - \ref{hyp6}, we first note that essential spectrum of the linearized operator
\[Lv:=-\Delta_k(\Delta_kv+\partial_uf(x,u_*)v)+c_*\partial_xv\]
can be calculated explicitly.  We insert the ansatz $u(x,y,\tau)=e^{\lambda\tau+\nu x+i\ell y}$ into the linearized equation, $v_t = L v$ with $h \equiv -1$, to obtain the dispersion relation
\begin{equation} 
d(\lambda,\nu;k,\ell,c)=-(\nu^2-k^2\ell^2)[(\nu^2-k^2\ell^2)-1]+c\nu-\lambda.\label{dispersion}
\end{equation} 
 To determine the essential spectrum (on a space which is not exponentially weighted), we solve $d(\lambda,im)=0$ for $\lambda$ in terms of $m$ and all parameters, obtaining
 \begin{equation}
 \Sigma_{ess}=\{-(-m^2-k^2\ell^2)[(-m^2-k^2\ell^2)-1]+icm \,\,|\,\, m\in\mathbb{R}\}.\label{ess_spec}
 \end{equation}
Computation of such curves readily gives that the essential spectrum is contained in the left half-plane, and only touches the imaginary axis in a quadratic tangency at the origin.  Furthermore,  one can readily calculate that the real part of the numerical point spectrum corresponding to $\Sigma_{ess}$ also does not vary with $c$.  Hence there is no resonant essential spectrum away from $\lambda = 0$ for $c$ near the Hopf point.

We study point spectrum numerically.  We truncate the linear operator 
\[
Lv:=-\Delta_k(\Delta_kv+\partial_uf(x,u_*)v)+c_*\partial_xv,
\]
to a bounded computational domain $(x,y)\in [-M,M]\times [0,2\pi)$ with periodic boundary conditions, discretize it spectrally, and evaluate it using the Fast Fourier Transform. Approximate eigenvalues and eigenfunctions are then calculated using the MATLAB command `eigs'. See Figure \ref{fig:spec_at_bif} for a depiction of the numerical spectrum near the bifurcation speed. Here, as the system possesses periodic boundary conditions, the work \cite{sandstede2000absolute} implies that the point spectrum of the truncated operator accumulate onto the essential and point spectrum of the unbounded domain operator as $M\rightarrow+\infty$. We thus conjugate the operator with an exponential weight which pushes the essential spectrum $\Sigma_{ess}$ into the left half-plane, leaving only numerical eigenvalues which approximate the point spectrum of the unbounded domain operator. In the exponentially weighted case, there are no eigenvalues at 0 due to the lack of translation symmetry in $x$ and that the front $u_*$ is trivial. This gives Hypothesis \ref{hyp3}.

\begin{figure}[htbp]
    \centering
    \includegraphics[scale=0.45]{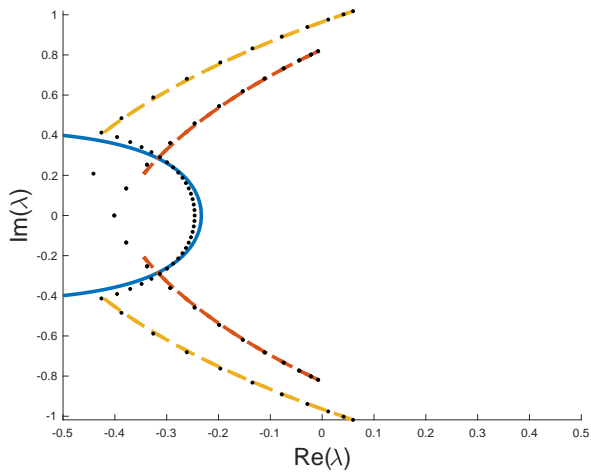}
     \caption{100 eigenvalues (dots) of the discretized and exponentially conjugated operator nearest the origin at approximately the time of bifurcation with $k=1/2$ and exponential weight $\eta=0.2$.  Overlaid are the essential spectrum of the weighted operator on an unbounded domain (solid curve), and the absolute spectrum (dashed curves).}
     \label{fig:spec_at_bif}
\end{figure}

 In numerically solving for the spectrum of the operator $L$, we are able to also find discretized approximations of eigenfunctions.  Three such eigenfunctions are shown in Figure \ref{fig:efuncs} left. The first eigenfunction corresponds to the patterns found in one dimension and comes from the most unstable branch, which reaches furthest into the right half-plane.  The other two eigenfunctions, which are transversely modulated, correspond to the rightmost eigenvalues of the next most unstable branch.  Varying $c$, we find the corresponding eigenvalues cross the imaginary axis generically for some unique $c_*$. Thus we can see numerically that there is an isolated pair of generic Hopf eigenvalues with algebraic and geometric multiplicity two, crossing at some non-zero speed $c_*$,  indicating that Hypothesis \ref{hyp5} is satisfied; see Figure \ref{fig:efuncs} center.

\begin{figure}[htbp]
\centering
\includegraphics[scale=0.275]{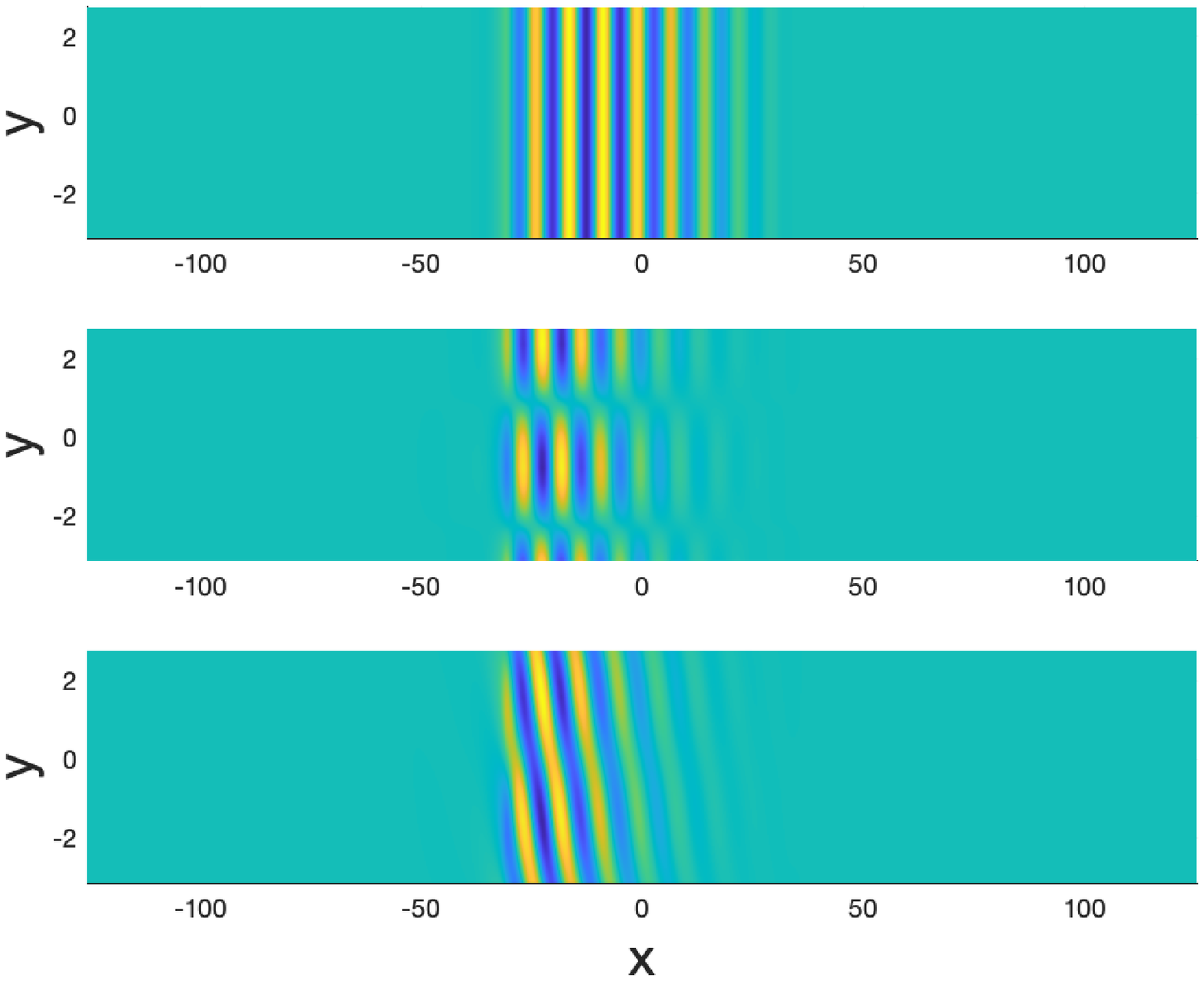}\hspace{-0.2in}
\includegraphics[scale=0.275]{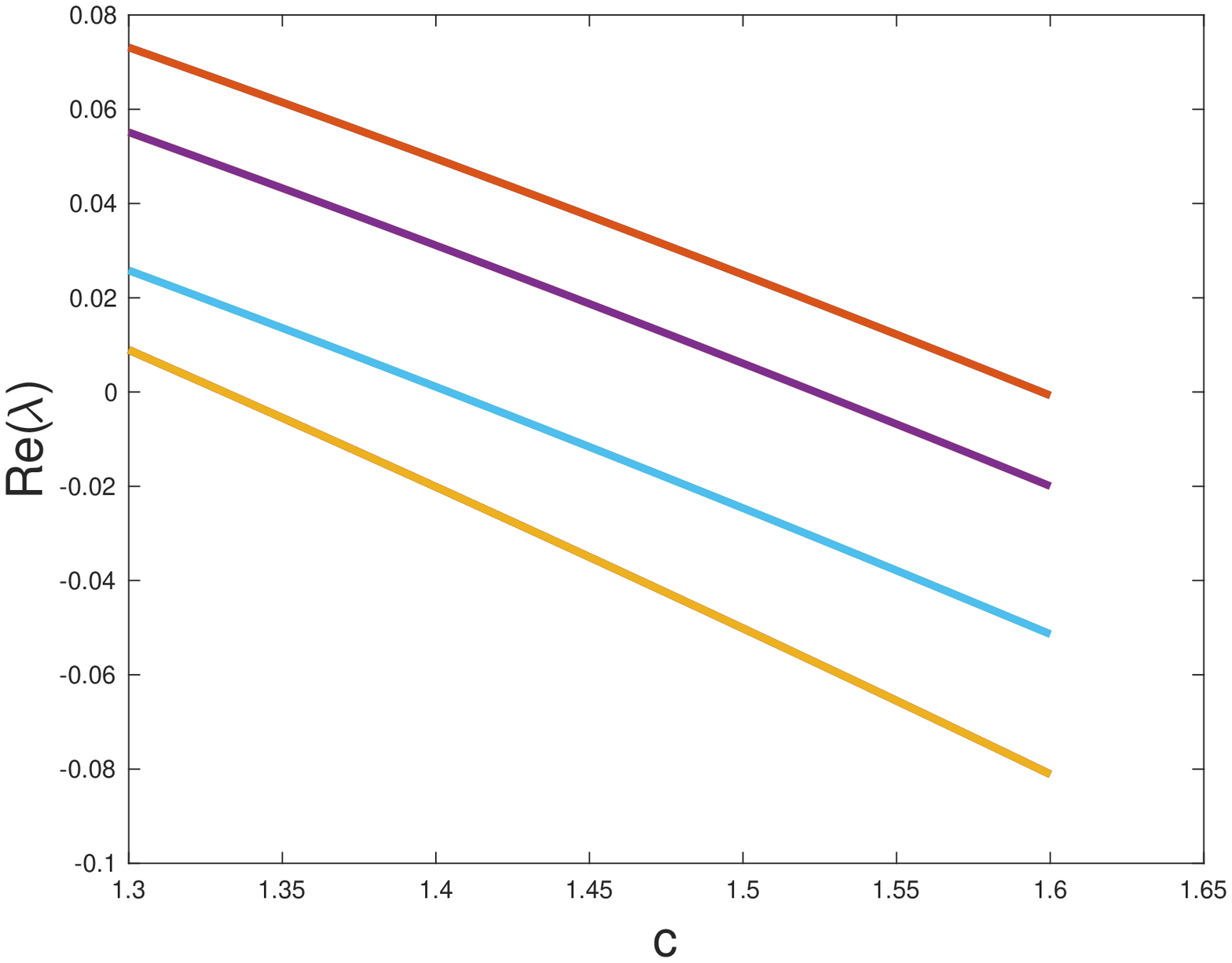}\hspace{-0.2in}
\includegraphics[scale=0.275]{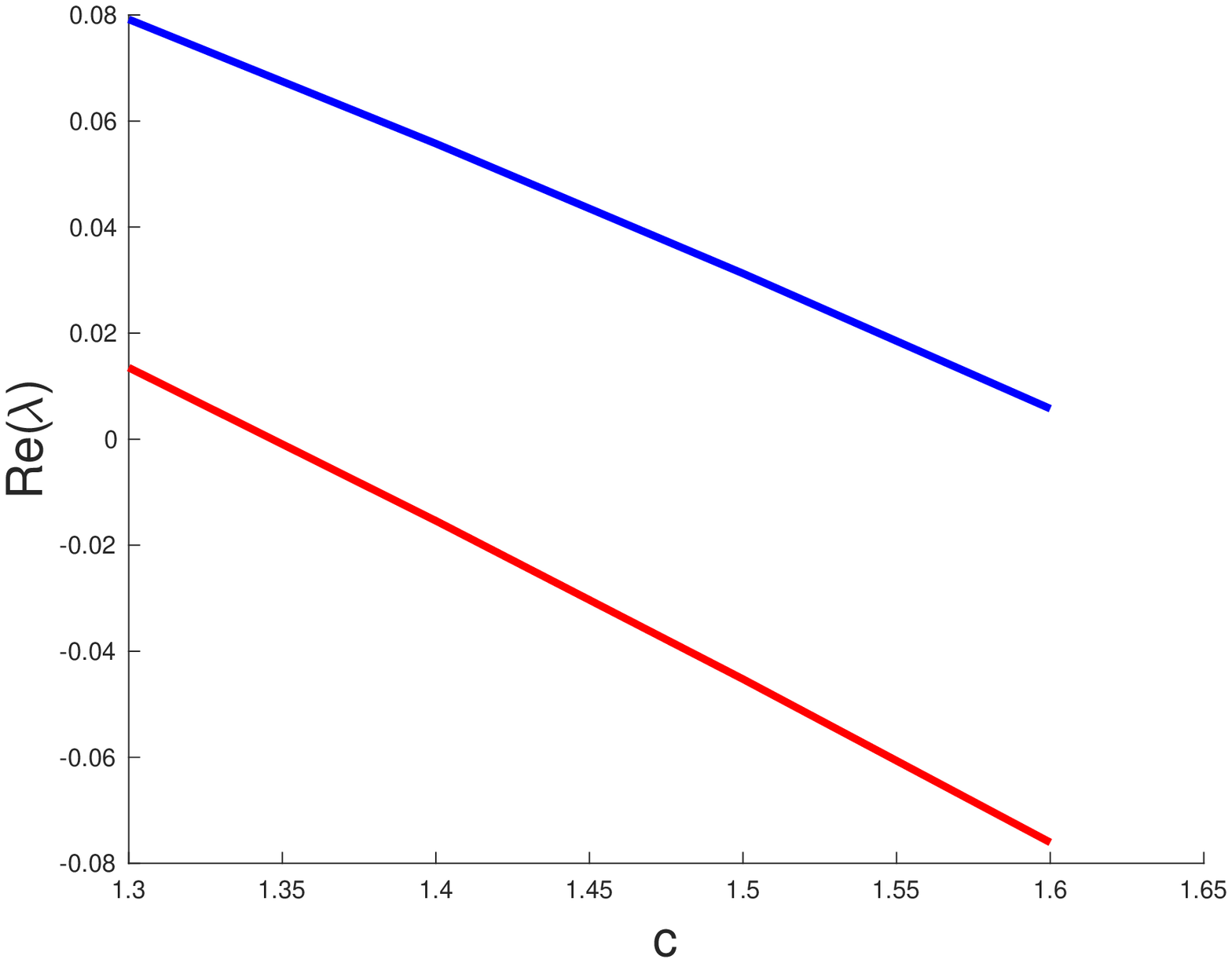}
\caption{(Left) Eigenfunctions: one depicting vertical stripes, and two others depicting transverse patterns.  (Center) The real parts of the first 10 unstable eigenvalues varying with $c$, indicating leading order Hopf bifurcation locations.  (Right) The real part of the branch points of the absolute spectrum varying with $c$.  The $\ell=1$ (lower) branch is neutral at $c_*\approx 1.35$, and the $\ell=0$ (upper) branch is neutral at $c_*\approx 1.6$.}
\label{fig:efuncs}
\end{figure}

Moving back to the unbounded domain problem, with $x\in \mathbb{R}$, we note that for $K$ large, point spectrum can be located by computing the absolute spectrum \cite{rademacher2007computing} of the plateau state, where $h \equiv 1$. Indeed, the work of \cite{sandstede2000gluing} implies that all but finitely many of the point spectrum of $L$, posed on the unbounded domain, accumulate onto the absolute spectrum of the trivial state with $h\equiv1$ with rate $O(1/K^2)$ as $K\rightarrow+\infty$. Hence branch points of the absolute spectrum give leading-order predictions for the onset of instabilities.

 To compute the absolute spectrum, we once again use the linear dispersion relation.  For each $\ell\in\mathbb{Z}$, we seek curves $(\lambda(\gamma),\nu(\gamma)),\, \gamma\in\mathbb{R}$, which solve $d(\lambda,\nu;k,\ell,c)=d(\lambda,\nu+i\gamma;k,\ell,c)=0$; see Figure \ref{fig:spec_at_bif} for computations using Mathematica. Branch points of the absolute spectrum can then be located by evaluating solutions at $\gamma = 0$.  As $c$ is decreased, we find that the branch points destabilize.  In Figure \ref{fig:spec_at_bif}, the most unstable dashed line corresponds to $\ell=0$, and the next most unstable dashed line to $\ell=\pm1$.  We can see from the figures that as the quench speed $c$ is decreased, the $\ell=0$ mode, corresponding to the $y$-independent mode, bifurcates first, followed by the $\ell=\pm1$ transverse modes.  Plotting $c$ versus $\mathrm{Re}(\lambda(0))$ we find good agreement with the numerical eigenvalues. 

Further, we can compute the speeds at which these branch points will cross the imaginary axis using the linear spreading speed calculations of \cite[Sec. 2.11]{van2003front}. Linear spreading speeds for the homogeneous system with $h\equiv 1$ can be obtained in the 2D channel by Fourier decomposing in $y$ and studying the spreading speed for each transverse modulation $e^{i\ell y},\, \ell\in\mathbb{Z}$. We obtain the following family of linearized equations
\begin{equation}
	\partial_{\tau}v=-(\partial_x^2-k^2\ell^2)[(\partial_x^2-k^2\ell^2)v+v], \quad \ell\in \mathbb{Z},
\end{equation}
 Expanding this with $k=1/2$, \cite{van2003front} gives the $\ell = 0$ and $\ell = \pm 1$ linear spreading speeds respectively as
\[
c_{*,0}=\frac{2}{3\sqrt{6}}(2+\sqrt{7})(\sqrt{7}-1)^{1/2} =  1.622...,
\qquad
c_{*,1}=\frac{7}{3\sqrt{3}}= 1.347...
\]
We find branch points for $|\ell|>1$ lie in the open left half-plane and are bounded away from the imaginary axis for speeds $c$ near the transverse Hopf speed $c_{*,1} = 1.347...$.  Since we have control of all other branch points near the transverse Hopf-speed $c_{*,1}=1.347$, we can conclude strong numerical evidence that no other resonant point spectra bifurcate at the same $c$ as the Hopf-instability.  This, along with our numerical computations and the discussion on the essential spectrum above, indicates Hypothesis \ref{hyp6} also holds.

\subsection{Bifurcations}\label{sec3.2}
Calculation of the bifurcation coefficients $\theta_1$ and $\theta_2$, and thus the direction of bifurcation, requires evaluation of the eigenfunctions, the corresponding adjoint eigenfunctions, as well as the evaluation of $u_*$ in the derivatives of $f$ (which is trivial in this case).  Instead of expanding these coefficients theoretically, we instead investigate the direction of bifurcation numerically.

Using the transverse eigenfunctions described in Figure \ref{fig:efuncs} above as initial conditions, we use direct numerical simulation, with spectral discretization in space and a Crank-Nicholson method in time, to simulate the bifurcated nonlinear states.  We then continue these solutions adiabatically, varying $c$ and letting the end-time solution of the previous $c$ value relax to a steady state for the new $c$ value before incrementing again.  This is done for both $\gamma=-1$ and $\gamma=2$, the difference of which we find as being mediation between super- and subcritical bifurcations of standing and rotating waves.  In doing so, we produce Figure \ref{fig:bif_diag}.  As these are direct numerical solutions, only locally stable states are observed.

\begin{figure}[htbp]
    \centering
    \includegraphics[scale=0.5]{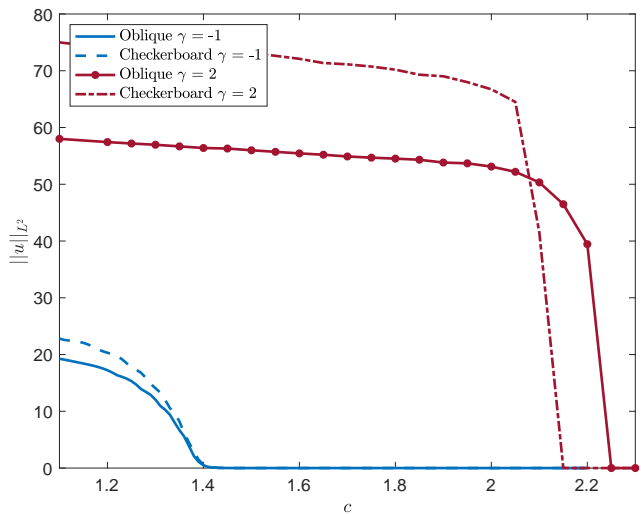}
    \caption{Numerical bifurcation diagram depicting the branching for both the pushed and pulled nonlinearities.}
    \label{fig:bif_diag}
\end{figure}

For the $\gamma=-1$, or pulled, case, we note that there is bifurcation around $c=1.4$ where the $L^2$ norm of the solution $u$ begins branching off from 0.  We would expect that the pushed case, here $\gamma=2$, would bifurcate from the same point, around $c=1.4$, with unstable subcritical branch bifurcating in $c>1.4$. While not observed, we expect this branch to continue up to some $c$ where it hits a fold point connecting with the large-amplitude nonlinear state observed in our numerics.  We also remark that the adiabatic continuations given by Figure \ref{fig:bif_diag} indicate that the folds for checkerboard and oblique stripes occur at different speeds.

\subsection{Varying $k$}\label{sec3.3}
We now wish to explore what happens as we vary the parameter $k$, which controls the vertical wavenumber of patterns.  To do this, we vary $k$ between 0 and 0.9 and find the 100 eigenvalues closest to 0 for a fixed speed, $c=1.2$.  We then find the most unstable (or least stable) transverse mode and plot its real part against $k$.  In doing so, we obtain Figure \ref{fig:k} left.

\begin{figure}[htbp]
    \centering
    \includegraphics[scale=0.4]{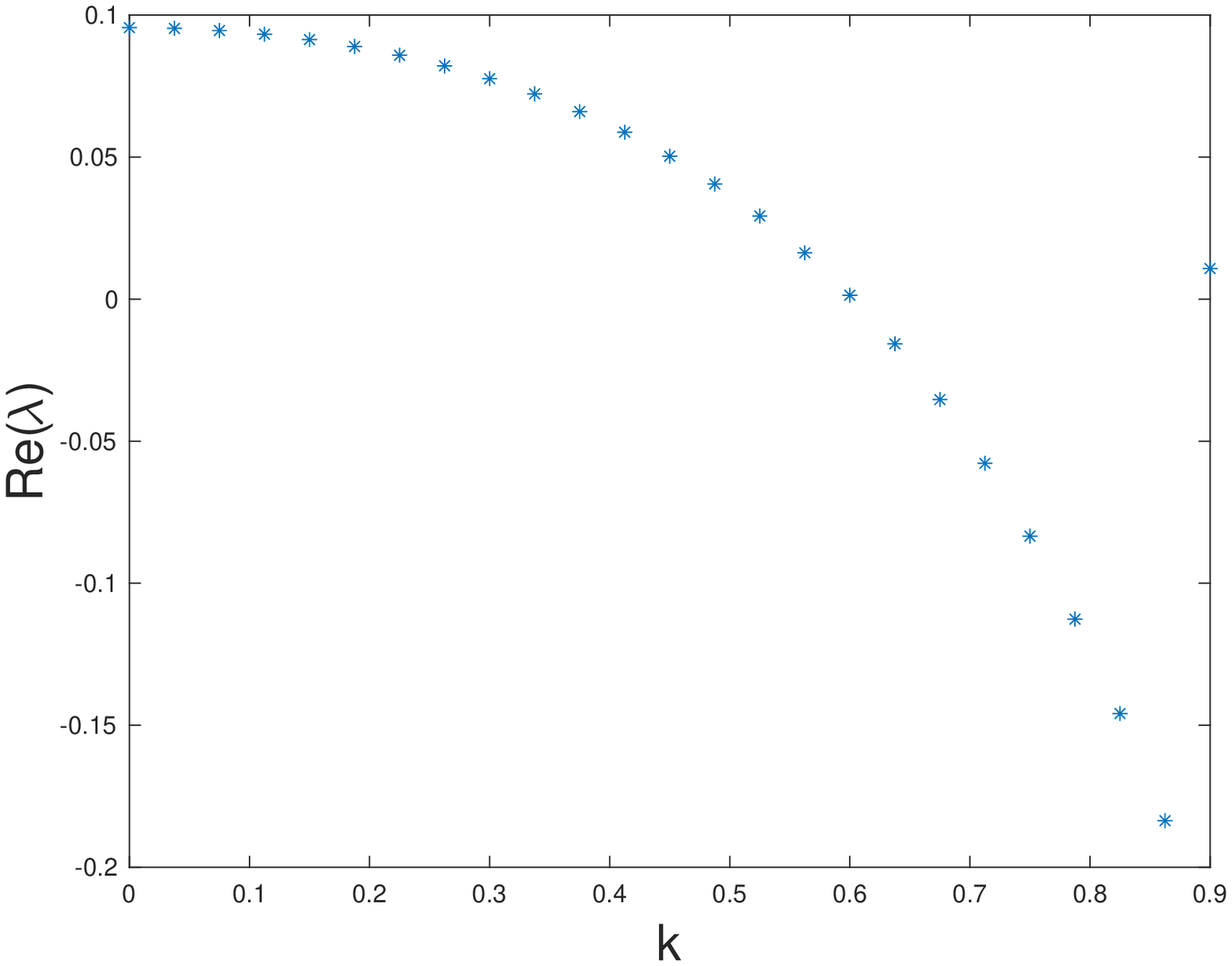}\hspace{-0.2in}
    \includegraphics[scale=0.4]{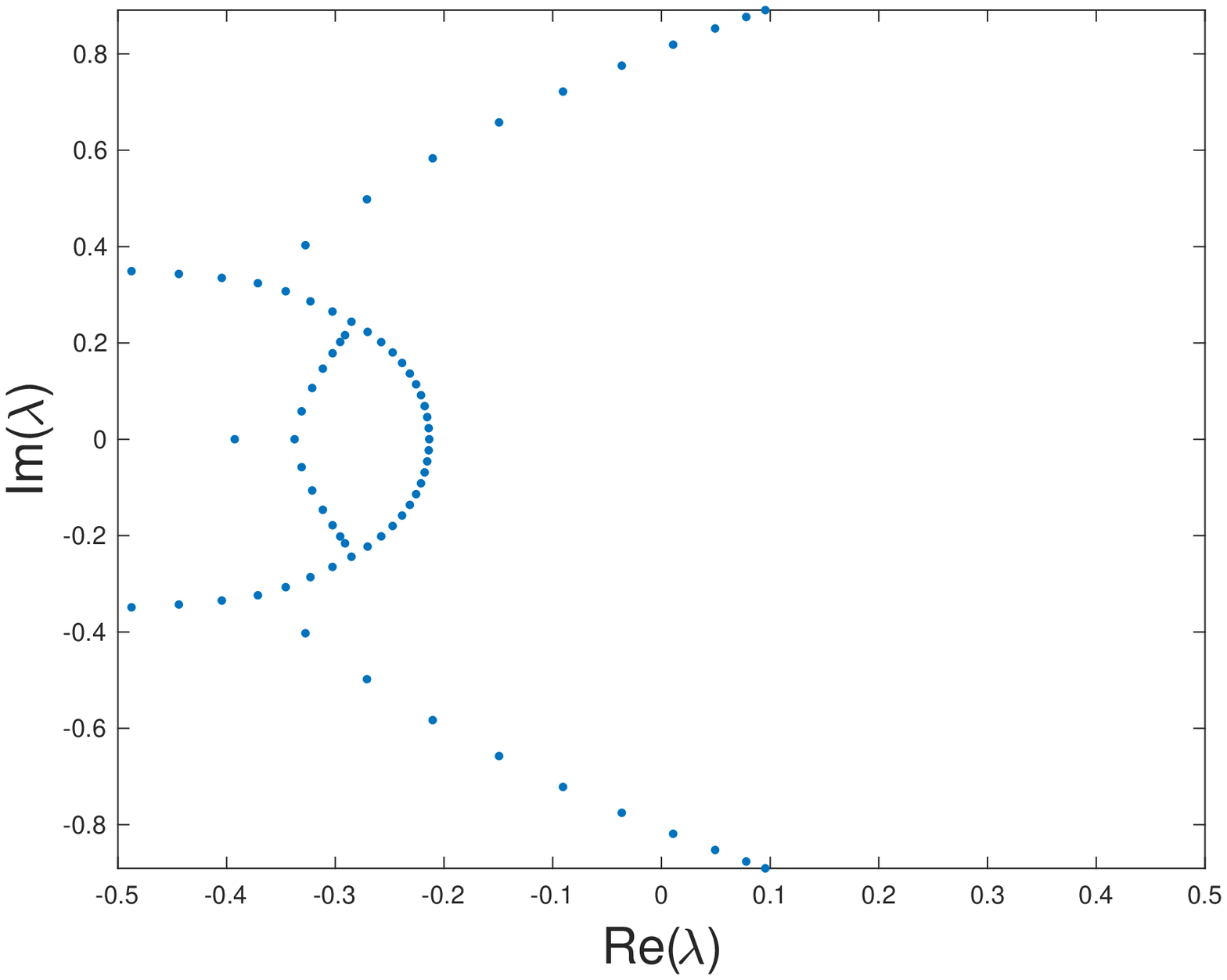}
    \caption{(Left) The real parts of the transverse eigenvalues as $k$ varies in $[0,0.9]$.  (Right) The spectrum with transverse wavenumber $k=94/99$ at speed $c=1.2$ with exponential weight $\eta=0.2$.}
    \label{fig:k}
\end{figure}

 For $c$ fixed positive, we find that as $k$ increases, and thus the vertical period decreases, the transverse mode stabilizes, indicating that no transverse patterns arise and that the transverse Hopf location happens for smaller speeds $c$. 
In Figure \ref{fig:k} right,  we depict the numerical spectrum for $k= 94/99$, observing that the transverse branches of spectrum have moved to the left of the essential spectrum, which itself is shifted due to the exponential weight.

\section{Discussion}\label{s:disc}
We have shown in Section \ref{s:ab} that, in the wake of a quench, the 2-dimensional Cahn-Hilliard equation can produce a pair of one-parameter families of time- and $y$-periodic solutions bifurcating from a given front solution under Hypotheses \ref{hyp1}-\ref{hyp6}.  Our results give leading-order forms for these solution families as well as computable formulas for the bifurcation coefficients, allowing the determination of the bifurcation direction.  In Section \ref{s:ex}, we have given an explicit example of this behavior, and shown numerically what happens as the vertical wavenumber $k$ increases.

There are several avenues of subsequent inquiry which could follow from our work. 
First of all, one naturally would wish to study how the local bifurcating branches established here continue globally in the quench speed $c$ and the vertical wavenumber $k$.  Indeed in the subcritical pushed case, $\gamma >1$, considered in Section \ref{s:ex}, one would seek to locate the secondary fold bifurcation to the large amplitude nonlinear states. We expect such a location to be mediated by the interaction of the oscillatory tail of the patterned state with the quenching interface \cite{goh2016pattern}.  Next it would be of interest to study pattern selection in a domain with large $y$ period (i.e., $k\rightarrow0^+$) where possibly several transverse modes can be excited. We expect the mode with the largest period to bifurcate first, but it would be interesting to see if subsequent bifurcations of higher harmonics lead to multi-mode interactions or defect nucleation.

In another direction, one could seek to establish transverse patterns where the spinodally unstable region is unbounded and bifurcating solutions are asymptotically periodic as $x\rightarrow-\infty$, for example taking a step-function like quench $h(\tilde x) = -\tanh(\delta\tilde x)$ in \eqref{e:fex}. One possible approach would be to first consider a heterogeneity of the form $h_K(\tilde x) = \tanh(\delta \tilde x) \tanh(-\delta(\tilde x + K))$, covered by our hypotheses, and take the large plateau limit $K\rightarrow+\infty$ to establish a full pattern forming front.

Next, there are several unanswered questions on stability of such fronts. Indeed, the stability of the parallel striped fronts, posed in either 1- or 2-dimensional spatial domains, has not been established. We expect a reduced stability principle \cite{kielhofer2011bifurcation} to provide a relatively straightforward approach to establishing stability in one dimension. Moving to the transversely modulated patterns studied in this work, one cannot use such reduced stability principles as the trivial state from which they bifurcate is already unstable due to the parallel striped Hopf instability. Hence we expect the transverse patterns to be unstable or metastable near the bifurcation point. Indeed, since we do observe these patterns numerically, it would be of interest to understand how initial conditions starting near the transverse patterns dynamically evolve, and how or whether they converge to another state such as the parallel striped fronts for long times. It would also be of interest to periodically extend both the parallel and transverse patterns in $y\in \mathbb{R}$ and consider stability to localized $L^2(\mathbb{R}^2)$ perturbations.

\appendix
\section{Fredholm Properties}\label{app1}

In this appendix, we provide the proof of Proposition \ref{pfred}.  The general approach will be to apply an abstract closed range lemma to the linear operator and its $\mathcal{X}$-adjoint to obtain that the linearization $\mathcal{L}$ is Fredholm. We then compute its index via a Fourier decomposition in $\tau$ and $y$. To begin, for $J>0$, let $\mathcal{X}(J)$ and $\mathcal{Y}(J)$ denote the spaces of functions, in $\mathcal{X}$ and $\mathcal{Y}$ respectively, which have $x$-support in the interval $[-J,J]$.  Since the embedding $\mathcal{Y}(J)\hookrightarrow\mathcal{X}(J)$ is compact, we have the following:

\begin{Lemma}\label{l:a1}
    There exist constants $C>0$ and $J>0$ such that the operator $\mathcal{L}$ as defined in Section 2 satisfies
    \begin{equation}
        \|\xi\|_\mathcal{Y}\leq C(\|\xi\|_{\mathcal{X}(J)}+\|\mathcal{L}\xi\|_\mathcal{X}),\quad \xi\in \mathcal{Y}.
    \end{equation}
\end{Lemma}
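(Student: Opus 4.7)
The plan is to prove this semi-Fredholm a-priori estimate by a partition-of-unity localization: we split $\xi$ into an interior piece controlled by bounded-domain elliptic regularity and two exterior pieces controlled by comparison with invertible asymptotic constant-coefficient operators. This is the standard Fredholm--Peetre route, and it is the first half (together with a parallel estimate for the $\mathcal{X}$-adjoint) of the proof that $\mathcal{L}$ is Fredholm as asserted in Proposition \ref{pfred}.

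Choose smooth cutoffs $\chi_-, \chi_0, \chi_+$ with $\chi_-+\chi_0+\chi_+\equiv 1$, $\chi_0$ supported in $[-2J, 2J]$ and $\chi_\pm$ supported in $\{\pm x \geq J\}$, for some large $J$ to be chosen. Introduce the asymptotic constant-coefficient operators
\begin{equation*}
\mathcal{L}_\pm v := \omega_*\partial_\tau v + \Delta_k(\Delta_k v + \partial_u f_\pm(u_\pm)v) - c_*\partial_x v,
\end{equation*}
and write $\mathcal{L}(\chi_*\xi) = \chi_*\mathcal{L}\xi + [\mathcal{L}, \chi_*]\xi$ for each $\ast\in\{-,0,+\}$. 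The commutator $[\mathcal{L}, \chi_*]$ is a differential operator of order at most three whose coefficients are supported in the annular transition region $\{J \leq |x| \leq 2J\}$, so it is controllable by the standard interpolation inequality $\|\xi\|_{H^3} \leq \varepsilon\|\xi\|_\mathcal{Y} + C_\varepsilon\|\xi\|_\mathcal{X}$. For the exterior pieces, Hypotheses \ref{hyp1}--\ref{hyp2} imply that $\partial_u f(x, u_*(x)) - \partial_u f_\pm(u_\pm)$ decays exponentially as $x\to\pm\infty$, so $(\mathcal{L} - \mathcal{L}_\pm)(\chi_\pm\xi)$ has arbitrarily small $\mathcal{X}$-norm as $J\to\infty$.

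The central step is showing that $\mathcal{L}_\pm : \mathcal{Y} \to \mathcal{X}$ is an isomorphism on the exponentially weighted space for small $\eta > 0$. Fourier decomposing in $\tau$ and $y$ reduces this to a family of fourth-order constant-coefficient ODEs in $x$ indexed by $(n,\ell)\in\mathbb{Z}^2$; after conjugation by $e^{\eta\langle x\rangle}$ the corresponding Fourier symbols in $\zeta$ are shifts of the dispersion relation \eqref{dispersion} by $\pm i\eta$. A direct computation shows that these symbols are bounded away from zero uniformly in $(\zeta, n, \ell)$: for $n\neq 0$ the term $i\omega_* n$ forces a nonvanishing imaginary part; for $n = 0$ and $(\zeta,\ell)$ bounded away from $(0,0)$, the essential-spectrum computation \eqref{ess_spec} is already bounded off the origin; and in the critical zero mode, where the mass-conservation structure of Cahn--Hilliard creates a quadratic tangency of the essential spectrum at $\lambda = 0$, the weight contributes a linear-in-$\eta$ term proportional to $c_*$, which is nonzero because $c_* > 0$. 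A Fourier-multiplier estimate then yields $\|\chi_\pm\xi\|_\mathcal{Y} \leq C\|\mathcal{L}_\pm(\chi_\pm\xi)\|_\mathcal{X}$.

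For the interior piece, standard fourth-order elliptic regularity on the bounded domain $\{|x|\leq 2J\}\times\mathbb{T}_y\times\mathbb{T}_\tau$ gives $\|\chi_0\xi\|_\mathcal{Y} \leq C(\|\chi_0\xi\|_\mathcal{X} + \|\mathcal{L}(\chi_0\xi)\|_\mathcal{X})$. Summing the three pieces, using the triangle inequality to replace each $\mathcal{L}_\pm(\chi_\pm\xi)$ by $\chi_\pm\mathcal{L}\xi + [\mathcal{L},\chi_\pm]\xi$ at the cost of the small coefficient-perturbation term, and absorbing the commutator via interpolation into $\varepsilon\|\xi\|_\mathcal{Y}$ on the left, yields the claimed estimate with some $J' > 2J$. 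The principal obstacle is the $(n,\ell)=(0,0)$ mode of $\mathcal{L}_\pm$, where neutral Cahn--Hilliard spectrum sits at the origin; verifying that the exponential weight genuinely pushes this spectrum off the imaginary axis (rather than only tangentially) is the one place where $c_* > 0$ and the spinodal stability of the asymptotic states $u_\pm$ are used in an essential way.
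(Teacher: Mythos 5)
Your overall architecture is essentially the paper's own: an interior bound (you use bounded-domain elliptic regularity plus commutator/interpolation absorption, the paper uses a global estimate of the same flavor in its Step 1), comparison of the exterior pieces with the asymptotic constant-coefficient operators $\mathcal{L}_\pm$ whose invertibility on the weighted space is read off from Fourier symbols (the paper's Step 2), and a cutoff/patching step using the exponential convergence of the coefficients from Hypotheses \ref{hyp1}--\ref{hyp2} (the paper's Step 3). So the route is not new, and most of it is sound.

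There is, however, a genuine flaw in your justification of the central symbol bound. On the mode $e^{in\tau+i\ell y+i\zeta x}$ the symbol of $\mathcal{L}_\pm$ (unweighted) is $i(\omega_* n-c_*\zeta)+s\bigl(s-\partial_u f_\pm(u_\pm)\bigr)$ with $s=\zeta^2+k^2\ell^2$; the transport term $-c_*\partial_x$ contributes $-c_*\zeta$ to the imaginary part, so your claim that ``for $n\neq0$ the term $i\omega_* n$ forces a nonvanishing imaginary part'' is false: the imaginary part vanishes along $\zeta=\omega_* n/c_*$, and nonvanishing of the symbol there depends on the real part, i.e.\ on whether $(\omega_* n/c_*)^2+k^2\ell^2$ hits a root of $s(s-\partial_u f_\pm(u_\pm))$. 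These are exactly the resonances that Hypothesis \ref{hyp6} (for $|n|\geq2$) and the isolation of the Hopf eigenvalues in Hypothesis \ref{hyp5} (for $|n|=1$) are imposed to exclude, and this is where those hypotheses must enter your proof. Your fallback ingredient, ``spinodal stability of the asymptotic states $u_\pm$'' (i.e.\ $\partial_u f_\pm(u_\pm)<0$, which would make the real part positive away from $s=0$), is not among the paper's hypotheses, and your appeal to \eqref{ess_spec} is specific to the example with $h\equiv-1$ rather than to the general setting; if $\partial_u f_\pm(u_\pm)>0$ the real part has nontrivial zeros and your ``direct computation'' breaks down. By contrast, your treatment of the critical $(n,\ell)=(0,0)$ mode -- the quadratic tangency removed by an $O(\eta)$ contribution proportional to $c_*>0$ -- is correct and matches the paper. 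Repair the $n\neq0$ case by invoking Hypotheses \ref{hyp5} and \ref{hyp6} (nonvanishing of the asymptotic symbols at $i\omega_* n$, stable under the small shift $\eta$), and the rest of your argument goes through.
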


\begin{proof}
We remark that the proof of this result follows a similar approach as \cite[Lem. 2.3]{Goh} but we include it for completeness. Throughout $C>0$ will be a changing constant, possibly dependent on the weight $\eta$, the front $u_*$, parameters $c_*,\omega_*$ and the nonlinearity $f$, but not $\xi$. 

    \textbf{Step 1:} We begin by proving that the estimate holds for $J=\infty$.  Assume for the moment that we have the exponential weight $\eta=0$. Then
  \begin{equation}
        \|\mathcal{L}\xi\|_{\mathcal{X}}\geq\|(\partial_\tau+\Delta_k^2)\xi\|_{\mathcal{X}}-\|\Delta_k(\partial_uf(x,u_*)\xi)-c_*\partial_x\xi\|_{\mathcal{X}}.
    \end{equation}
    Since $f$ and $u_*$ are smooth, we have for all $\epsilon>0$\begin{align}
        \|\Delta_k(\partial_uf(x,u_*)\xi)-c_*\partial_x\xi\|_{\mathcal{X}}&\leq C\|\xi\|_{H^2(\mathbb{R}\times\mathbb{T},X)}\nonumber\\
        &\leq C\|\xi\|^{1/2}_{\mathcal{X}}\cdot\|\xi\|^{1/2}_{H^4(\mathbb{R}\times\mathbb{T},X)}\nonumber\\
        &\leq C(\epsilon\|\xi\|_{H^4(\mathbb{R}\times\mathbb{T},X)}+\frac{1}{4\epsilon}\|\xi\|_{\mathcal{X}}).
    \end{align}
    By combining the two inequalities, we see that for $\epsilon$ sufficiently small\begin{align}
        \|\mathcal{L}\xi\|_{\mathcal{X}}+\frac{C}{4\epsilon}\|\xi\|_{\mathcal{X}}&\geq\|(\partial_\tau+\Delta_k^2)\xi\|_{\mathcal{X}}-C\epsilon\|\xi\|_{H^4(\mathbb{R}\times\mathbb{T},X)}\nonumber\\
        &\geq C_1\|\xi\|_{\mathcal{Y}}.
    \end{align}
    For $\eta>0$, one follows a similar procedure with the conjugated operator $\mathcal{L}_\eta:=e^{\eta\langle x\rangle}\mathcal{L}e^{-\eta\langle x\rangle}$.  Additional terms which arise from the conjugation are small because the weight $\eta$ is small.
    
    \textbf{Step 2:} Next, we wish to show that the estimate holds for the constant coefficient operators \begin{equation}
        \mathcal{L}_{\pm}\xi=\omega_*\partial_\tau \xi+\Delta_k(\Delta_k \xi+f_\pm'(u_\pm)\xi)-c_*\partial_x \xi.
    \end{equation}
    Again, we must work with the conjugated operators $\mathcal{L}_{\pm,\eta}:=e^{\eta\langle x\rangle}\mathcal{L}_\pm e^{-\eta\langle x\rangle}$.  If $\mathcal{L}_{\pm,\eta}\xi=h$, then by taking the Fourier transform in $x,y,$ and $\tau$ we see that
    \begin{align}
        &\hat{h}(i\zeta,i\chi,i\rho)=\nonumber\\
       & \left[i\rho\omega_*-i(\zeta-\eta)c_*+\left((\zeta-\eta)^2+k^2\chi^2\right)^2-\left((\zeta-\eta)^2+k^2\chi^2\right)f'_\pm(u_\pm)\right]\hat{\xi}(i\zeta,i\chi,i\rho),\\
         &\zeta\in\mathbb{R}, \chi,\rho\in\mathbb{Z}.\nonumber 
    \end{align}
    By Hypothesis \ref{hyp6}, for $\eta>0$ the essential spectrum of the time-independent portion of the operator will not intersect $i\omega_*\mathbb{Z}$.  Thus both equations $\mathcal{L}_{\pm,\eta}\xi=h$ are invertible, and thus so are their Fourier transforms.  Using this, we get\begin{equation}
        \hat{\xi}=[i\rho\omega_*-i(\zeta-\eta)c_*+((\zeta-\eta)^2+k^2\chi^2)^2-((\zeta-\eta)^2+k^2\chi^2)f'_\pm(u_\pm)]^{-1}\hat{h}.
    \end{equation}
    The coefficient on the right-hand side must be bounded by our assumptions, and so we have
    \begin{align}
        \|\hat{\xi}\|_{\mathcal{Y}}&\leq\sup_{\zeta,\chi,\rho}\left|[i\rho\omega_*-i(\zeta-\eta)c_*+((\zeta-\eta)^2+k^2\chi^2)^2-((\zeta-\eta)^2+k^2\chi^2)f'_\pm(u_\pm)]^{-1}\right|\|\hat{h}\|_{\mathcal{X}}\nonumber\\
        &=C\|\widehat{\mathcal{L}_{\pm,\eta}\xi}\|_\mathcal{X}.
    \end{align}
    By Plancherel's Theorem, this gives us $\|\xi\|_{\mathcal{Y}}\leq C\|\mathcal{L}_\pm\xi\|_{\mathcal{X}}$.
    
    \textbf{Step 3:} Finally, we seek to complete the proof by using the estimates previously established to perform a patching argument.  For $J>1$, let $\xi^{\pm}\in\mathcal{Y}$ be such that $\xi^+(x,y)=0$ for all $x\leq J-1$ and $\xi^-(x,y)=0$ for all $x\geq 1-J$.  The exponential convergence rates from Hypotheses \ref{hyp1} and \ref{hyp2} as $x\to\pm\infty$ give the following: for every $\epsilon>0$ there is some $J>0$ sufficiently large such that\begin{equation}
        \|(\mathcal{L}_\pm-\mathcal{L})\xi^\pm\|_\mathcal{X}\leq\epsilon\|\xi^\pm\|_{H^2_\eta(\mathbb{R}\times\mathbb{T}_y,X)}.
    \end{equation}
    From this and the estimate from Step 2, we get that
\begin{align}
        \|\xi^\pm\|_\mathcal{Y}&\leq C\|\mathcal{L}_\pm\xi^\pm\|_\mathcal{X}\nonumber\\
        &\leq C(\|(\mathcal{L}_\pm-\mathcal{L})\xi^\pm\|_\mathcal{X}+\|\mathcal{L}\xi^\pm\|_\mathcal{X})\nonumber\\
        &\leq C(\epsilon\|\xi^\pm\|_{H^2_\eta(\mathbb{R}\times\mathbb{T}_y,X)}+\|\mathcal{L}\xi^\pm\|_{\mathcal{X}}).
\end{align}
    Choosing $\epsilon<\frac{1}{C}$, we see that
    \begin{equation}\label{e:a11}
        \|\xi^\pm\|_\mathcal{Y}\leq C\|\mathcal{L}\xi^\pm\|_\mathcal{X}.
    \end{equation}
    Next we consider an element $\xi\in\mathcal{Y}$ with $\xi=0$ for all $|x|\leq J-1$. Then, we can decompose $\xi=\xi^++\xi^-$, where
    \begin{equation}
        \xi^+=\left\{\begin{array}{cc}
             \xi(x),&x\geq 0  \\
             0,&x<0 
        \end{array}\right.,\xi^-=\left\{\begin{array}{cc}
             0,& x\geq0 \\
             \xi(x),&x<0 
        \end{array}\right..\nonumber
    \end{equation}
    Applying estimate \eqref{e:a11} and the triangle inequality, we obtain
    \begin{align}
        \|\xi\|_\mathcal{Y}^2&\leq\|\xi^+\|^2_\mathcal{Y}+\|\xi^-\|^2_\mathcal{Y}\nonumber\\
        &\leq C(\|\mathcal{L}\xi^+\|^2_\mathcal{X}+\|\mathcal{L}\xi^-\|^2_\mathcal{X})\nonumber\\
        &\leq C(\|\mathcal{L}\xi\|^2_\mathcal{X}+\|\mathcal{L}\xi\|^2_\mathcal{X})=C\|\mathcal{L}\xi\|^2_\mathcal{X}.\label{e:a12}
    \end{align}
    Lastly, for a general $\xi\in\mathcal{Y}$, we choose a smooth bump function $\beta$ such that $\beta=1$ when $|x|\leq J-1$ and $\beta=0$ for $|x|\geq J$.  From the triangle inequality (first line), the results of Step 1 (second line), and estimate \eqref{e:a12} (second line), we see that\begin{align}
        \|\xi\|_\mathcal{Y}&\leq\|\beta\xi\|_\mathcal{Y}+\|(1-\beta)\xi\|_{\mathcal{Y}}\nonumber\\
        &\leq C(\|\beta\xi\|_\mathcal{X}+\|\mathcal{L}(\beta\xi)\|_\mathcal{X}+\|\mathcal{L}((1-\beta)\xi)\|_\mathcal{X})\nonumber\\
        &\leq C(\|\xi\|_{\mathcal{X}(J)}+\|\mathcal{L}\xi\|_\mathcal{X}).\nonumber
    \end{align}
\end{proof}

We then have the following corollary:
\begin{Corollary}
    $\mathcal{L}$ has closed range and finite dimensional kernel.
\end{Corollary}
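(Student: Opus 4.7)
The plan is to deduce the corollary from the a priori estimate of Lemma \ref{l:a1} together with the compactness of the embedding $\mathcal{Y}(J)\hookrightarrow\mathcal{X}(J)$ (which holds for any fixed $J$ by Rellich--Kondrachov, since $\mathcal{Y}$ is locally $H^4$ in the spatial variables and $L^2$ in $\tau$ while $\mathcal{X}$ is locally $L^2$). This is a textbook semi-Fredholm argument (sometimes called Peetre's lemma): any closed operator satisfying $\|\xi\|_\mathcal{Y}\le C(\|\xi\|_Z+\|\mathcal{L}\xi\|_\mathcal{X})$ with $\mathcal{Y}\hookrightarrow Z$ compact automatically has closed range and finite-dimensional kernel. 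I will just spell out the two usual steps.

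\textbf{Finite-dimensional kernel.} For $\xi\in\ker\mathcal{L}$ the estimate reduces to $\|\xi\|_\mathcal{Y}\le C\|\xi\|_{\mathcal{X}(J)}$. Take any sequence $\xi_n\in\ker\mathcal{L}$ bounded in $\mathcal{Y}$. Their restrictions to $[-J,J]\times\mathbb{T}_y\times\mathbb{T}_\tau$ are bounded in $\mathcal{Y}(J)$, so by compactness of $\mathcal{Y}(J)\hookrightarrow\mathcal{X}(J)$ a subsequence is Cauchy in $\mathcal{X}(J)$. Applying the estimate to differences $\xi_n-\xi_m\in\ker\mathcal{L}$ shows the same subsequence is Cauchy in $\mathcal{Y}$. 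Hence the closed unit ball of $\ker\mathcal{L}$ is sequentially compact in $\mathcal{Y}$, and $\dim\ker\mathcal{L}<\infty$ follows from the Riesz characterization of finite-dimensional Banach spaces.

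\textbf{Closed range.} Since $\ker\mathcal{L}$ is finite-dimensional (and $\mathcal{Y}$ is a Hilbert space with its natural inner product), it is orthogonally complemented: write $\mathcal{Y}=\ker\mathcal{L}\oplus K^\perp$. Suppose $\mathcal{L}\xi_n\to h$ in $\mathcal{X}$. Replacing $\xi_n$ by its projection onto $K^\perp$ does not alter $\mathcal{L}\xi_n$, so we may assume $\xi_n\in K^\perp$. I claim $\|\xi_n\|_\mathcal{Y}$ stays bounded. Otherwise, after passing to a subsequence, let $\tilde\xi_n=\xi_n/\|\xi_n\|_\mathcal{Y}$, so $\|\tilde\xi_n\|_\mathcal{Y}=1$, $\tilde\xi_n\in K^\perp$, and $\mathcal{L}\tilde\xi_n\to 0$. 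By the compact embedding a subsequence of $\tilde\xi_n$ converges in $\mathcal{X}(J)$; the estimate applied to differences then forces the sequence to be Cauchy in $\mathcal{Y}$, with limit $\tilde\xi\in K^\perp$ of unit $\mathcal{Y}$-norm satisfying $\mathcal{L}\tilde\xi=0$, i.e.\ $\tilde\xi\in K^\perp\cap\ker\mathcal{L}=\{0\}$, a contradiction. With $\|\xi_n\|_\mathcal{Y}$ bounded, the same compactness-plus-estimate argument yields a subsequence converging in $\mathcal{Y}$ to some $\xi$ with $\mathcal{L}\xi=h$, so $h\in\mathrm{Range}(\mathcal{L})$.

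No real obstacle is anticipated: the content is entirely in Lemma \ref{l:a1}, which was already proved, and the compactness of the local embedding, which is standard. The only mild care required is in the setup of the orthogonal complement, which is available here because $\mathcal{Y}$ is a Hilbert space and $\ker\mathcal{L}$ is finite-dimensional hence closed.
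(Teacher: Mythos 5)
Your argument is correct and follows the same route as the paper: both rest entirely on the a priori estimate of Lemma \ref{l:a1} together with the compactness of $\mathcal{Y}(J)\hookrightarrow\mathcal{X}(J)$. The only difference is that the paper simply cites an abstract closed range lemma (Taylor, Ch.~6, Prop.~6.7) at this point, whereas you write out the standard Peetre-type argument in full, which is a faithful proof of that cited lemma.
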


\begin{proof}
    Since the embedding $\mathcal{Y}(J)\hookrightarrow\mathcal{X}(J)$ is compact, we have that the identity operator is compact.  Then, the proof follows by applying an abstract closed range lemma, such as in \cite[Ch. 6, Prop 6.7]{Taylor}.
\end{proof}

We can define the $L^2$-adjoint by integration by parts, finding $\mathcal{L}^*=-\omega_*\partial_\tau+\Delta_k^2+\partial_uf(x,u_*)\Delta_k+c_*\partial_x$.  Since we wish to work with exponentially weighted spaces, we must define the $L^2_\eta$-adjoint.  This is done by once again working with conjugated operators posed on $L^2$.  Recall the conjugated operator $\mathcal{L}_\eta$, posed on $L^2$, is given by $\mathcal{L}_\eta:=e^{\eta\langle x\rangle}\mathcal{L}e^{-\eta\langle x\rangle}$.
Because of this, we have 
\begin{align}
    \langle \mathcal{L}_\eta u,v\rangle_{L^2}&=\langle e^{\eta\langle x\rangle}\mathcal{L}e^{-\eta\langle x\rangle}u,v\rangle_{L^2}=\nonumber\\
    &=\frac{1}{4\pi^2}\int_0^{2\pi}\int_0^{2\pi}\int_{-\infty}^\infty e^{\eta\langle x\rangle}\mathcal{L}(e^{-\eta\langle x\rangle}u)\overline{v}dxdy d\tau=\nonumber\\
    &=\frac{1}{4\pi^2}\int_0^{2\pi}\int_0^{2\pi}\int_{-\infty}^\infty e^{-\eta\langle x\rangle}u\mathcal{L}^*(\bar v e^{\eta\langle x\rangle})dxdyd\tau=\nonumber\\
    &=\langle u,e^{-\eta\langle x\rangle}\mathcal{L}^*e^{\eta\langle x\rangle}v\rangle_\mathcal{X}=\langle u,\mathcal{L}^*_\eta v\rangle_{\mathcal{X}}
\end{align}
and hence \begin{equation}\mathcal{L}^*_\eta:=e^{-\eta\langle x\rangle}\mathcal{L}^*e^{\eta\langle x\rangle}.\nonumber\end{equation}

Hence $\mathcal{L}^*$ is defined on a weighted $L^2$ space with weight $e^{-\eta\langle x\rangle}$.  This conjugated operator can be run through the same estimates as in Lemma \ref{l:a1} for $\eta$ sufficiently small, and we reach the conclusion of the corollary.  Thus $\mathcal{L}:\mathcal{Y}\to\mathcal{X}$ is a Fredholm operator.

To find the Fredholm index, we first form the Fourier series in $\tau$ and $y$ to get
\begin{equation}
    u(x,y,\tau)=\sum_{\ell_\tau,\ell_y}e^{i\ell_\tau\tau}e^{i\ell_yy}\hat{u}_{\ell_\tau,\ell_y}(x),
\end{equation}
where $\hat{u}_{\ell_\tau,\ell_y}\in L^2_\eta(\mathbb{R})$, 
 as well as the decomposition of $\mathcal{X}$ as $\bigoplus_{\ell_\tau,\ell_y}\mathcal{X}_{\ell_\tau,\ell_y}$, where $\mathcal{X}_{\ell_\tau,\ell_y}:=\{e^{i\ell_\tau\tau}e^{i\ell_yy}\hat{u}_{\ell_\tau,\ell_y}(x),\,\,|\,\hat{u}_{\ell_\tau,\ell_y}\in L^2_\eta(\mathbb{R})\}$; see \cite{AnB} for more detail.  This induces a decomposition of $\mathcal{Y}$ as $\mathcal{Y}=\bigoplus_{\ell_\tau,\ell_y}\mathcal{Y}\cap\mathcal{X}_{\ell_\tau,\ell_y}=\bigoplus_{\ell_\tau,\ell_y}\mathcal{Y}_{\ell_\tau,\ell_y}$.  We then define
\begin{align}
\mathcal{L}_{\ell_\tau,\ell_y}:\,\, \mathcal{Y}_{\ell_\tau,\ell_y}\subset\mathcal{X}_{\ell_\tau,\ell_y}&\to\mathcal{X}_{\ell_\tau,\ell_y}\nonumber\\
   \hat u&\mapsto (\partial_x^2-k^2\ell_y^2)[(\partial_x^2-k^2\ell_y^2)\hat u+\partial_uf(x,u_*)\hat u]-\left(c_*\partial_x+i\omega_*\ell_\tau\right) \hat u.\nonumber
\end{align}

We first organize this decomposition into three subspaces, 
$
\mathcal{X}_{0,0}, 
 \bigoplus_{|\ell_\tau|=1, |\ell_y| = 1} \mathcal{X}_{\ell_\tau,\ell_y}, 
 $ 
 and their complement, $\mathcal{X}_h = X_{1,0}\oplus X_{0,1}\oplus X_{2,1}\oplus X_{1,2}\oplus \left( \bigoplus_{|\ell_y|,|\ell_\tau|\geq2} X_{\ell_y,\ell_\tau}\right)$, in $\mathcal{X}$.
\
\begin{Lemma}\label{l:00}
    $\mathrm{ind}\,\mathcal{L}_{0,0}=-1$.
\end{Lemma}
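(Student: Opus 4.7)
The plan is to view $\mathcal{L}_{0,0}$ as a single fourth-order ODE operator on $L^2_\eta(\mathbb{R})$, recast it as a first-order spatial-dynamical system, and compute its Fredholm index via Palmer's theorem together with a count of spatial eigenvalues at $\pm\infty$ in the weighted setting.

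First, since elements of $\mathcal{X}_{0,0}$ depend only on $x$, the operator reduces to
\begin{equation}
\mathcal{L}_{0,0}\hat u = \partial_x^2\bigl(\partial_x^2\hat u + \partial_u f(x,u_*)\hat u\bigr) - c_*\partial_x\hat u.\nonumber
\end{equation}
Rewriting this as $U_x = A(x)U$ with $U=(\hat u, \hat u_x, \hat u_{xx}, \hat u_{xxx})^\top$, the matrix $A(x)$ converges exponentially to constants $A_\pm$ by Hypotheses \ref{hyp1} and \ref{hyp2}, and the characteristic equation $\det(A_\pm - \nu I)=0$ is exactly the $\lambda=0$, $\ell=0$ dispersion relation
\begin{equation}
d_\pm(0,\nu) = \nu^4 - f'_\pm(u_\pm)\nu^2 - c_*\nu = \nu\bigl(\nu^3 - f'_\pm(u_\pm)\nu - c_*\bigr) = 0.\nonumber
\end{equation}

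Next, I count how the four spatial roots split after conjugation by $e^{\eta\langle x\rangle}$, which shifts them by $+\eta$ at $+\infty$ and $-\eta$ at $-\infty$. One root is $\nu=0$; the remaining three are the roots of the cubic. Since the essential spectrum of $L$ on the unweighted space lies in the closed left half-plane with only a quadratic tangency at $\lambda=0$ coming from the mass-conservation structure (consistent with the discussion around \eqref{ess_spec} and Hypotheses \ref{hyp3}, \ref{hyp6}), the only obstruction to hyperbolicity of $A_\pm$ is the simple zero root, and the cubic contributes exactly one root with positive real part and two with negative real part. For $\eta>0$ sufficiently small, the three cubic roots remain off the imaginary axis under the shift, while the zero root moves to $+\eta$ at $+\infty$ (joining the existing unstable direction) and to $-\eta$ at $-\infty$ (joining the stable directions). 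Hence the asymptotic systems for the weighted operator are hyperbolic with $\dim W^u_+ = 2$ and $\dim W^u_- = 1$.

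Finally, Palmer's theorem (or the exponential dichotomy framework of \cite{sandstede2000absolute}) gives
\begin{equation}
\mathrm{ind}\,\mathcal{L}_{0,0} = \dim W^u(-\infty) - \dim W^u(+\infty) = 1-2 = -1.\nonumber
\end{equation}
The main obstacle I anticipate is rigorously justifying the root configuration for the cubic uniformly across admissible asymptotic states $u_\pm$ and verifying that for small $\eta$ no cubic root crosses the imaginary axis; this is essentially a restatement of the marginal stability/hyperbolicity already encoded by Hypotheses \ref{hyp3} and \ref{hyp6}, and is the content used to ensure $\mathcal{L}$ remains Fredholm after the small weight is introduced.
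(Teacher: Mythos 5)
Your computation is correct and lands on the right index, but it is organized differently from the paper's proof. The paper factors $\mathcal{L}_{0,0}=-\partial_x\circ\Tilde{L}$ with $\Tilde{L}=-\partial_x(\partial_x^2+\partial_uf(x,u_*))+c_*$, argues that $\Tilde{L}$ has index $0$ because the asymptotic cubics at $x=\pm\infty$ have the same number of unstable spatial roots, and then adds the index $-1$ of $\partial_x$ on the weighted space via Fredholm composition algebra. You instead keep the full fourth-order operator, pass to a first-order system, and apply Palmer's theorem to the weight-conjugated asymptotic matrices; the zero spatial root being shifted to $+\eta$ at $+\infty$ and to $-\eta$ at $-\infty$ is precisely the mechanism that makes $\partial_x$ have index $-1$ in the paper's factorization, so the two arguments are the same eigenvalue count packaged differently. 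Your version has the merit of making the role of the exponential weight completely explicit (and your orientation of Palmer's formula, $\mathrm{ind}=\dim W^u(-\infty)-\dim W^u(+\infty)$, is the correct one); the paper's factorization avoids tracking the near-zero root altogether. (Incidentally, your split of the cubic into one unstable and two stable roots is the correct one; in the paper's argument only the equality of the counts at the two ends is actually used.)

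Two small repairs. First, there is a sign slip in the characteristic polynomial: inserting $e^{\nu x}$ into $\partial_x^2(\partial_x^2+f'_\pm(u_\pm))-c_*\partial_x$ gives $\nu\bigl(\nu^3+f'_\pm(u_\pm)\nu-c_*\bigr)$, not $\nu\bigl(\nu^3-f'_\pm(u_\pm)\nu-c_*\bigr)$; this is harmless since your count does not depend on the real coefficient of $\nu$. Second, the justification of the root split should not be routed through stability of the essential spectrum: Hypotheses \ref{hyp1}--\ref{hyp6} do not assert that $\Sigma_{ess}$ lies in the left half-plane (that is only verified for the example in Section \ref{s:ex}), and no such assumption is needed. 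For $c_*>0$ the cubic $\nu^3+a\nu-c_*$ has no purely imaginary roots for any $a\in\mathbb{R}$ (a root $\nu=im$ forces $c_*=0$), so the number of roots in the open right half-plane is independent of $a$ and can be read off at $a=0$, where the roots are the three cube roots of $c_*$: exactly one unstable and two stable. This elementary homotopy settles the uniformity issue you flag at the end, holds at both ends $x\to\pm\infty$ simultaneously, and guarantees hyperbolicity of the shifted matrices for all sufficiently small $\eta>0$; with it, your argument is complete. (As a citation note, \cite{sandstede2000absolute} concerns absolute spectra and domain truncation; Palmer's theorem itself is the appropriate reference for the dichotomy-index formula.)
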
 
\begin{proof}
  Recall, $\mathcal{L}_{0,0}=\partial_x^2[\partial_x^2+\partial_uf(x,u_*)]-c_*\partial_x=-L = -\partial_x\circ\Tilde{L}$,
where $\Tilde{L}:=-\partial_x(\partial_x^2+\partial_uf(x,u_*))+c_*$ converges to the constant coefficient operators $\Tilde{L}_{\pm}$ as $x\rightarrow\pm\infty$ by our hypotheses. For $c>0$, each of the polynomials $\nu^3+f'_\pm(u_\pm)\nu-c=0$ has two positive roots and one negative root. Thus the difference between the number of unstable eigenvalues is zero and so $\Tilde{L}$ has Fredholm index 0.  Then since $\partial_x$ has Fredholm index -1, we have that $\mathrm{ind}\,\mathcal{L}_{0,0}=-1$.
\end{proof}

\begin{Lemma}\label{L:11}
    For $|\ell_\tau|=|\ell_y|=1$, $\mathrm{ind}\,\mathcal{L}_{\ell_\tau,\ell_y}=0$.
\end{Lemma}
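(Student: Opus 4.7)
The plan is to view $\mathcal{L}_{\ell_\tau,\ell_y}$ as a fourth-order scalar ODE in $x$ whose coefficients converge to constants at an exponential rate by Hypotheses \ref{hyp1} and \ref{hyp2}, and to compute its Fredholm index via the standard Palmer-type formula for ODE operators, which expresses
\[
\mathrm{ind}\,\mathcal{L}_{\ell_\tau,\ell_y} \;=\; i^u_- - i^u_+,
\]
the difference between the numbers of roots with positive real part (counted in the weighted $L^2_\eta$ setting) of the asymptotic characteristic polynomials at $x\to\pm\infty$. With $|\ell_y|=1$ these polynomials are the quartics
\[
p_\pm(\nu) \;=\; (\nu^2-k^2)^2 + f_\pm'(u_\pm)(\nu^2-k^2) - c_*\nu - i\omega_*\ell_\tau.
\]

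The first step is hyperbolicity: to show that $p_\pm$ has no purely imaginary roots, so that the operator is Fredholm and the index formula applies. Setting $\nu=i\zeta$ with $\zeta\in\mathbb{R}$ and separating real and imaginary parts of $p_\pm(i\zeta)=0$ reduces the problem to the simultaneous conditions
\[
c_*\zeta + \omega_*\ell_\tau = 0, \qquad (\zeta^2+k^2)\bigl[(\zeta^2+k^2)-f_\pm'(u_\pm)\bigr] = 0,
\]
which together force the resonance $f_\pm'(u_\pm) = (\omega_*\ell_\tau/c_*)^2 + k^2$. For $|\ell_\tau|\geq 2$ this is directly forbidden by Hypothesis \ref{hyp6}, and for $|\ell_\tau|=1$ it is ruled out by the isolation of the Hopf eigenvalues $\pm i\omega_*$ from the essential spectrum of $L$ asserted in Hypothesis \ref{hyp5}. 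Choosing $\eta>0$ smaller than the minimal distance from the roots of $p_\pm$ to $i\mathbb{R}$ preserves hyperbolicity in the weighted space.

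The second step is the count, achieved by homotopy in $c_*$. At $c_*=0$ the quartic becomes biquadratic in $\mu := \nu^2 - k^2$, giving
\[
\mu \;=\; \tfrac{1}{2}\Bigl(-f_\pm'(u_\pm) \pm \sqrt{f_\pm'(u_\pm)^2 + 4i\omega_*\ell_\tau}\Bigr),
\]
and each such $\mu$ produces the two $\nu$-roots $\pm\sqrt{\mu+k^2}$. Because $\omega_*\ell_\tau\neq 0$ the discriminant has nonzero imaginary part, so $\mu+k^2$ is nonreal and its two square roots lie in opposite open half-planes, yielding exactly two unstable and two stable roots of $p_\pm$ at $c_*=0$. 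Continuously deforming $c_*$ back to its target value, the hyperbolicity of the first step forbids any root crossing $i\mathbb{R}$, so $i^u_+ = i^u_- = 2$ along the entire homotopy, giving $\mathrm{ind}\,\mathcal{L}_{\ell_\tau,\ell_y} = 2-2 = 0$.

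The main obstacle I anticipate is the hyperbolicity verification at the resonant mode $|\ell_\tau|=1$, where Hypothesis \ref{hyp6} is explicitly silent and the needed spectral gap must instead be extracted from the isolation clause of Hypothesis \ref{hyp5}. A secondary technical point is tracking the direction of the weight-induced shift of the characteristic roots at the two asymptotic ends under the symmetric weight $e^{\eta\langle x\rangle}$; this is handled by taking $\eta$ small enough that no root of $p_\pm$ migrates across the shifted imaginary line $\mathrm{Re}\,\nu = \mp\eta$.
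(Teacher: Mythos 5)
Your route---Fourier reduction to a fourth-order ODE operator in $x$, hyperbolicity of the asymptotic characteristic polynomials, and an index count of unstable spatial roots via Palmer's theorem---is a genuinely different argument from the paper's, which instead uses Hypothesis \ref{hyp5} to identify a one-dimensional kernel of $\mathcal{L}_{1,1}$ and of its adjoint and concludes index zero directly. Your Step 1 is sound: the resonance condition $f_\pm'(u_\pm)=(\omega_*\ell_\tau/c_*)^2+k^2$ is exactly the condition that $i\omega_*$ lies on a Fredholm border of $L$ in the $\ell_y=\pm1$ block, which the isolation of the Hopf eigenvalues in Hypothesis \ref{hyp5} excludes, and taking $\eta$ small preserves hyperbolicity relative to the shifted lines.

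The gap is in Step 2. The assertion that ``the hyperbolicity of the first step forbids any root crossing $i\mathbb{R}$'' along the homotopy $c\in[0,c_*]$ is unjustified: Step 1 gives hyperbolicity only at the endpoint $c=c_*$ (and your explicit computation gives it at $c=0$), while your own resonance criterion shows a purely imaginary root appears at an intermediate speed whenever $f_\pm'(u_\pm)=(\omega_*/c)^2+k^2$ for some $c\in(0,c_*)$, i.e., whenever $f_\pm'(u_\pm)>(\omega_*/c_*)^2+k^2$. Nothing in Hypotheses \ref{hyp1}--\ref{hyp6} rules this out; they constrain the spectrum only at $c_*$ and at the frequencies $i\omega_*\mathbb{Z}$, not along the whole path in $c$. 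If such a crossing occurs at one end but not the other (the limits $f_+'$ and $f_-'$ need not coincide), the counts at $c_*$ are no longer $2$ and $2$ and the transfer of the $c=0$ count fails; for instance with $f_+'(u_+)=100$, $f_-'(u_-)=-1$, $k=\omega_*=c_*=1$, a root crosses near $c\approx 0.1$ at the $+\infty$ end only. (Such a configuration is in fact incompatible with Hypothesis \ref{hyp5}, precisely because it would force a nonzero unweighted index---but your argument neither detects nor uses this.) The repair is to drop the homotopy altogether: Hypothesis \ref{hyp5} makes $\pm i\omega_*$ isolated eigenvalues of finite multiplicity, so $L-i\omega_*$, and hence its restriction to each $\ell_y=\pm1$ Fourier block, is Fredholm of index $0$ on the unweighted space; Palmer's formula then yields $i^u_-=i^u_+$ at $c=c_*$ directly (the common value is irrelevant, so the explicit count $2$ is not needed), and your small-$\eta$ observation transfers index $0$ to the weighted space. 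This fixed version is essentially the paper's proof rewritten in your language.
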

\begin{proof}
There are four index pairs considered here, depending on the signs of $\ell_\tau$ and $\ell_y$.  We only show the case $\ell_\tau=\ell_y=1$, as the other three cases follow the same reasoning. Here we have 
$$
\mathcal{L}_{1,1}=(\partial_x^2-k^2)[(\partial_x^2-k^2)+\partial_uf(x,u_*)]-c_*\partial_x+i\omega_*\partial_\tau = -L_{1,1} + i\omega_*\partial_\tau
$$
with the spatial operator $L_{1,1}:=-(\partial_x^2-k^2)[(\partial_x^2-k^2)+\partial_uf(x,u_*)]+c_*\partial_x$. Hypothesis \ref{hyp5} implies that $L_{1,1}$ has a eigenvalue $\lambda = i \omega$ with one-dimensional eigenspace spanned by $e^{i(y+\tau)}p(x)$, and similarly for the adjoint $L_{1,1}^*$ for the eigenvalue $-i\omega_*$;  see \cite[section 1.5.5]{Kato}.  Thus the kernel of both $\mathcal{L}_{1,1}$ and its adjoint operator $\mathcal{L}_{1,1}^*$ is one-dimensional and hence $\mathrm{ind}\,\mathcal{L}_{1,1}=0$.
\end{proof}

\begin{Lemma}
Defining $\mathcal{L}_h := \mathcal{L}\big|_{\mathcal{X}_h}$, we have $\mathrm{ind}\, \mathcal{L}_h = 0$. 
\end{Lemma}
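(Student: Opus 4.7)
The plan is to further decompose $\mathcal{L}_h$ by Fourier series in $\tau$ and $y$, exploit the identity $\mathcal{L}_{\ell_\tau,\ell_y} = -(L_{\ell_y} + i\omega_*\ell_\tau)$ (where $L_{\ell_y}$ denotes the $\ell_y$-th Fourier component in $y$ of the operator $L$ from Hypothesis \ref{hyp3}), and use the spectral Hypotheses \ref{hyp3}, \ref{hyp5}, \ref{hyp6} to show that each admissible fiber operator is boundedly invertible on $L^2_\eta(\mathbb{R})$. Since index is additive over a direct sum in which all but finitely many summands are invertible, this will give $\mathrm{ind}\,\mathcal{L}_h = 0$.

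Case-by-case analysis for the admissible index set defining $\mathcal{X}_h$:
\textbf{Case A} ($\ell_\tau = 0$, $\ell_y \neq 0$). Hypothesis \ref{hyp3} rules out $0$ from the extended point spectrum of $L$, and since eigenfunctions of $L$ split along $\ell_y$-Fourier modes, $0$ is not an eigenvalue of $L_{\ell_y}$; the exponential weight pushes the associated essential spectrum into the open left half-plane, yielding invertibility on $L^2_\eta(\mathbb{R})$.
\textbf{Case B} ($\ell_\tau \notin \{0,\pm 1\}$). Hypothesis \ref{hyp6} gives that $L - i\omega_*\ell_\tau$ is invertible on unweighted $L^2$, and this persists on $L^2_\eta$ for $\eta > 0$ small by upper semi-continuity of the spectrum and the weight-induced shift of the essential spectrum.
\textbf{Case C} ($\ell_\tau = \pm 1$, $\ell_y \neq \pm 1$). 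Hypothesis \ref{hyp5} locates every eigenfunction for the eigenvalues $\pm i\omega_*$ in the $\ell_y = \pm 1$ transverse subspaces, so $L_{\ell_y} \mp i\omega_*$ has trivial kernel on the $\ell_y$-fiber under consideration and is invertible on $L^2_\eta$.

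To combine these fiber-wise statements into a claim about $\mathcal{L}_h$ itself, I would observe that for $|\ell_\tau| + |\ell_y|$ large, the principal symbol $(\partial_x^2 - k^2\ell_y^2)^2 - i\omega_*\ell_\tau$ dominates the lower-order terms, so that $\mathcal{L}_{\ell_\tau,\ell_y}$ is invertible by a Neumann-series argument with resolvent norms decaying uniformly in the large-mode regime. Consequently only finitely many fibers require the individualized treatment of Cases A--C, each of which is invertible, and the direct-sum operator $\mathcal{L}_h = \bigoplus_{(\ell_\tau,\ell_y) \in I} \mathcal{L}_{\ell_\tau,\ell_y}$ is bounded, invertible, and in particular Fredholm of index zero. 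The Fredholm property of the restriction is also guaranteed by applying Lemma \ref{l:a1} restricted to $\mathcal{X}_h$.

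The main obstacle will be rigorously transferring Hypothesis \ref{hyp6}, stated on the unweighted full space $L^2(\mathbb{R} \times \mathbb{T})$, to each exponentially weighted fiber $L^2_\eta(\mathbb{R})$, and ensuring the uniform resolvent estimates in the large-mode regime are compatible with the Sobolev norms defining $\mathcal{Y}$. In particular, one must reconcile the weighted conjugation $\mathcal{L}_\eta = e^{\eta\langle x\rangle}\mathcal{L}e^{-\eta\langle x\rangle}$ used in Lemma \ref{l:a1} to tame the essential spectrum with the fiber-wise spectral statements, and verify that no new kernel elements appear from the conjugation's lower-order corrections when $\eta$ is taken sufficiently small.
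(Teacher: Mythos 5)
Your case split (A: $\ell_\tau=0$ via Hypothesis \ref{hyp3}; B: $|\ell_\tau|\geq 2$ via Hypothesis \ref{hyp6}; C: $\ell_\tau=\pm1$, $|\ell_y|\neq 1$ via Hypothesis \ref{hyp5}) covers exactly the fibers of $\mathcal{X}_h$ and is precisely the paper's argument, so this is essentially the same proof. The only difference is that the paper stops at Fredholm index $0$ on each Fourier fiber and assembles the result by Fredholm algebra (with the a priori estimate of Lemma \ref{l:a1} already providing closed range and finite-dimensional kernel), whereas you push for full bounded invertibility on the weighted fibers plus a large-mode Neumann-series bound --- stronger than what the lemma needs, and the weighted-versus-unweighted transfer you flag as the main obstacle is handled in the paper implicitly by working with the conjugated operators $\mathcal{L}_\eta$ for $\eta$ small, exactly as you anticipate.
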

\begin{proof}
First, we recall that for $\lambda\in i\mathbb{Z}\diagdown \{\pm i\omega_*,0\}$, $L - \lambda$ and thus $L^* - \lambda$ is invertible. This implies that for any $\ell_\tau\in \mathbb{Z}\diagdown \{0,\pm1\}$ and $\ell_y\in \mathbb{Z}$, that $\mathrm{ind}\, \mathcal{L}_{\ell_t,\ell_y} = 0$. 

Next, operators $\mathcal{L}_{\ell_t,\ell_y}$ with $\ell_\tau = 0$ also have index 0 due to Hypothesis \ref{hyp3} which gives that $\lambda = 0$ is not in the extended point spectrum.   Finally, operators with $\ell_\tau = \pm1$ but $|\ell_y|\neq1$ have index 0 due to Hypothesis \ref{hyp5} which gives that the Hopf eigenfunctions lie in the $\ell_y = \pm 1$ subspaces. 
\end{proof}

Combining the above Lemmas, we use standard Fredholm algebra results \cite{Taylor} to conclude 
\begin{Proposition}
    $\mathrm{ind}\,\mathcal{L}=-1$ for $\mathcal{L}:\mathcal{Y}\to\mathcal{X}$.
\end{Proposition}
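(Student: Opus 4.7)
The plan is to combine the three preceding lemmas by exploiting the fact that the coefficients of $\mathcal{L}$ depend only on $x$. This translation invariance in $\tau$ and $y$ means $\mathcal{L}$ commutes with the orthogonal Fourier projections $\Pi_{\ell_\tau,\ell_y}$ onto $\mathcal{X}_{\ell_\tau,\ell_y}$, so $\mathcal{L}$ splits as a direct sum of block operators $\mathcal{L}_{\ell_\tau,\ell_y}$ on $\mathcal{Y}_{\ell_\tau,\ell_y}$. Grouping the blocks according to the three preceding lemmas, I would record the decomposition
\[
\mathcal{L} \;=\; \mathcal{L}_{0,0} \;\oplus\; \bigoplus_{|\ell_\tau|=|\ell_y|=1}\mathcal{L}_{\ell_\tau,\ell_y} \;\oplus\; \mathcal{L}_h,
\]
relative to the matching orthogonal splittings of domain and codomain.

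Each summand is already known to be Fredholm from the preceding lemmas: Lemma \ref{l:00} gives $\mathrm{ind}\,\mathcal{L}_{0,0}=-1$; Lemma \ref{L:11} gives $\mathrm{ind}\,\mathcal{L}_{\ell_\tau,\ell_y}=0$ for each of the four index pairs with $|\ell_\tau|=|\ell_y|=1$; and the third lemma gives $\mathrm{ind}\,\mathcal{L}_h=0$. Applying additivity of the Fredholm index under (finite) direct sums of Fredholm operators, as in \cite[Ch.~6]{Taylor}, the total index is immediate:
\[
\mathrm{ind}\,\mathcal{L} \;=\; \mathrm{ind}\,\mathcal{L}_{0,0} \;+\; \sum_{|\ell_\tau|=|\ell_y|=1}\mathrm{ind}\,\mathcal{L}_{\ell_\tau,\ell_y} \;+\; \mathrm{ind}\,\mathcal{L}_h \;=\; -1 + 4\cdot 0 + 0 \;=\; -1.
\]

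The conceptually delicate point — ensuring that the infinite direct sum of high-harmonic blocks inside $\mathcal{L}_h$ really does assemble into a single Fredholm operator, rather than failing through accumulation of kernel dimensions or loss of closed range as $|\ell_\tau|,|\ell_y|\to\infty$ — was already absorbed into the preceding lemma via Hypotheses \ref{hyp3} and \ref{hyp6}. Those hypotheses rule out resonant spectrum at nonzero integer multiples of $\omega_*$ other than the Hopf pair and at $\lambda=0$, which provides the uniform invertibility of the high-harmonic blocks needed to treat $\mathcal{L}_h$ as a single Fredholm operator. With that work already done, no new estimate is required here; the main obstacle of the overall Fredholm analysis was really Lemma \ref{l:a1}, and the present proposition is just the bookkeeping step that extracts the index from a finite sum of already-established Fredholm pieces.
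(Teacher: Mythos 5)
Your proposal is correct and follows essentially the same route as the paper: the paper likewise combines Lemmas \ref{l:00}, \ref{L:11}, and the lemma on $\mathcal{L}_h$ via standard Fredholm algebra (citing \cite{Taylor}) to obtain $\mathrm{ind}\,\mathcal{L}=-1+4\cdot 0+0=-1$. Your added remarks on the Fourier block decomposition and on why the high-harmonic blocks assemble into a single Fredholm operator merely make explicit what the paper leaves to the cited Fredholm algebra and the preceding lemmas.
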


Now we consider a closed subset defined $\mathring{\mathcal{X}}:=\{u\in\mathcal{X}|\langle u,e^{-2\eta\langle x\rangle}\rangle_\mathcal{X}=0\}$.    We note that for any exponential weight $\eta>0$ and for any $u\in\mathcal{X}$
\begin{align}
    \langle u,e^{-2\eta\langle x\rangle}\rangle_\mathcal{X}&=\frac{1}{4\pi^2}\int_0^{2\pi}\int_0^{2\pi}\int_{-\infty}^\infty u(x,y,\tau)e^{-2\eta\langle x\rangle}e^{2\eta\langle x\rangle}dxdy d\tau=\frac{1}{4\pi^2}\int_0^{2\pi}\int_0^{2\pi}\int_{-\infty}^\infty u(x,y,\tau)dxdy d\tau\nonumber
\end{align}

For any $v=\mathcal{L}u, u\in\mathcal{Y}$ we have $\langle v,e^{-2\eta\langle x\rangle}\rangle_\mathcal{X}$=0 by integration by parts, and so $v\in\mathring{\mathcal{X}}$.  Thus we have that $\mathcal{L}$ maps $\mathcal{Y}$ into $\mathring{\mathcal{X}}$.  Finally, by composing $\mathcal{L}:\mathcal{Y}\rightarrow \mathcal{X}$ with the index 1 orthogonal projection $\mathcal{P}:\mathcal{X}\rightarrow\mathring{\mathcal{X}}$, Fredholm algebra gives that $\mathcal{L}:\mathcal{Y}\rightarrow \mathring{\mathcal{X}}$ has Fredholm index 0.

This concludes the proof of Proposition \ref{pfred}.

\section{Hopf Bifurcation with $O(2)$ symmetry}\label{app2}

The contents of this section can be found in detail in \cite[Ch. 16]{Golubitsky}. As was stated in the hypotheses, the presence of symmetry forces generic Hopf eigenvalues to have algebraic and geometric multiplicity 2.  Hence an equivariant Hopf theorem is needed. In this appendix, we lay out an introduction to this theorem.

Suppose we have a bifurcation problem with $\Gamma$-symmetry, i.e., we have \[\partial_{\tau}u+G(u;c)=0\] where $G:\mathbb{R}^n\times\mathbb{R}\to\mathbb{R}^n$, and $G(\gamma\cdot u;c)=\gamma\cdot G(u;c)$ for $\gamma\in\Gamma$, with $c$ being our bifurcation parameter.  We say that the space $\mathbb{R}^n$ is \textbf{$\Gamma$-simple} if either of the following conditions holds: (1) we can write $\mathbb{R}^n=V\oplus V$ for some subspace $V$ where linear mappings $F:V\to V$ such that $$F(\gamma\cdot v)=\gamma\cdot F(v), \forall\,\, v\in V, \gamma\in\Gamma$$ are multiples of the identity (called absolutely irreducible), or (2) $\mathbb{R}^n$ is such that the only $\Gamma$-invariant subspaces are $\{0\}$ and $\mathbb{R}^n$, but it does not meet condition (1) (called non-absolutely irreducible).

\begin{Lemma}\label{lem3}
    Suppose that $\mathbb{R}^n$ is $\Gamma$-simple, that $G$ commutes with the action of $\Gamma$, and that $(dG)_{0;0}$ has $i$ as an eigenvalue.  Then
    
    (a) The eigenvalues of $(dG)_{0;c}$ consist of a complex conjugate pair $\mu(c)\pm i\kappa(c)$, each of multiplicity $m=n/2$.  Moreover, $\mu$ and $\kappa$ are smooth functions of $c$.
    
    (b) There is an invertible linear map $S:\mathbb{R}^n\to\mathbb{R}^n$, commuting with $\Gamma$, such that \[(dG)_{0;0}=SJS^{-1}\qquad \text{where}\qquad J=\begin{pmatrix}
        0&-I_m\\I_m&0
    \end{pmatrix}.\]
\end{Lemma}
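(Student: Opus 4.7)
The plan is to exploit the rigid structure of the commutant algebra $\mathrm{End}_\Gamma(\mathbb{R}^n)$ forced by the $\Gamma$-simple hypothesis, which splits into two cases by definition. Since $G$ is $\Gamma$-equivariant, so is the derivative $(dG)_{0;c}$ for every $c$, hence $(dG)_{0;c}$ lies in this commutant for all $c$. Both the eigenvalue statement (a) and the normal form statement (b) will then follow from reducing to a small linear-algebra problem inside the commutant, combined with a smooth continuation in $c$.

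First I would handle the absolutely irreducible case $\mathbb{R}^n = V\oplus V$. Any $\Gamma$-equivariant $A:V\oplus V\to V\oplus V$ is a $2\times 2$ block matrix whose entries are $\Gamma$-equivariant maps $V\to V$; by absolute irreducibility each block is a scalar multiple of $I_V$, so $A = M\otimes I_V$ for some $M\in\mathbb{R}^{2\times 2}$. Hence $(dG)_{0;c} = M(c)\otimes I_V$ for a smooth family $M(c)$, and the eigenvalues of $(dG)_{0;c}$ are those of $M(c)$ with multiplicity $m = \dim V = n/2$. Since $M(0)$ has $i$ as an eigenvalue, $\mathrm{tr}\,M(0)=0$ and $\det M(0)=1$, and the quadratic formula applied to the characteristic polynomial of $M(c)$ yields smooth branches $\mu(c)\pm i\kappa(c)$ for $c$ near $0$. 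For part (b), pick $T\in GL_2(\mathbb{R})$ with $T^{-1}M(0)T=\bigl(\begin{smallmatrix}0&-1\\1&0\end{smallmatrix}\bigr)$ and set $S=T\otimes I_V$; the identity $J_2\otimes I_m = J$ reproduces the displayed normal form, and $S$ commutes with $\Gamma$ automatically because $\Gamma$ acts trivially on the first tensor factor.

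The non-absolutely irreducible case proceeds through Schur's lemma over $\mathbb{R}$, which identifies $\mathrm{End}_\Gamma(\mathbb{R}^n)$ with a real division algebra, necessarily $\mathbb{C}$ or $\mathbb{H}$ (the $\mathbb{R}$ possibility being excluded as it coincides with absolute irreducibility). In the complex sub-case, $\mathbb{R}^n$ inherits a $\Gamma$-equivariant almost complex structure $J_0$ and $(dG)_{0;c}$ acts as multiplication by a smooth complex scalar $\lambda(c)=\mu(c)+i\kappa(c)$, giving real eigenvalues $\lambda(c)$ and $\overline{\lambda(c)}$ each of multiplicity $n/2$; picking any real basis in which $J_0$ has matrix $J$ furnishes the desired $S$. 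The quaternionic sub-case is treated analogously using two anticommuting equivariant complex structures to produce the same normal form.

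The main obstacle is the careful bookkeeping in identifying the equivariant endomorphism algebra case by case, especially handling the possibly quaternionic sub-case cleanly, and confirming that the smoothness in $c$ of the scalar parametrization of $(dG)_{0;c}$ survives through to smoothness of $\mu$ and $\kappa$. Once the algebra is pinned down, the construction of $S$ is transparent because the tensor-product or complex-scalar factorization makes $\Gamma$-equivariance of the change of basis manifest.
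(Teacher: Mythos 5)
Note first that the paper does not prove this lemma at all: it is quoted from the literature and the ``proof'' consists of the citation to Golubitsky--Stewart--Schaeffer, Ch.~16, so the only comparison available is between your argument and that source. Your treatment of the absolutely irreducible case $\mathbb{R}^n=V\oplus V$ is correct and complete: equivariant maps are exactly $M\otimes I_V$ with $M\in\mathbb{R}^{2\times2}$, the eigenvalue and smoothness claims follow from $\operatorname{tr}M(0)=0$, $\det M(0)=1$ and the negativity of the discriminant near $c=0$, and $S=T\otimes I_V$ is automatically $\Gamma$-equivariant because $\Gamma$ acts only on the $V$ factor. This is also the only case the paper actually needs, since the $O(2)$-Hopf setting here is $V\oplus V$ with $V$ the (absolutely irreducible) standard representation of $O(2)$.

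The gap is in part (b) of your non-absolutely irreducible case. You assert that ``picking any real basis in which $J_0$ has matrix $J$ furnishes the desired $S$,'' but the resulting change-of-basis map has no reason to commute with $\Gamma$ -- unlike in the tensor-product case, equivariance of $S$ is now exactly the nontrivial point. Worse, in the $\mathbb{C}$-commutant sub-case the group of equivariant automorphisms is the commutative group of invertible elements of $\{aI+bJ_0\}$, so conjugation by equivariant maps fixes the commutant pointwise; in particular $-J_0$ is \emph{not} equivariantly conjugate to $J_0$, while the hypothesis that $i$ is an eigenvalue of $(dG)_{0;0}$ does not distinguish $(dG)_{0;0}=J_0$ from $(dG)_{0;0}=-J_0$ (both real matrices have spectrum $\{\pm i\}$). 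Hence your construction cannot be completed as written: one must either argue that the fixed matrix $J$ is itself an equivariant complex structure in the given coordinates and that $(\mathbb{R}^n,J)$ and $(\mathbb{R}^n,(dG)_{0;0})$ are isomorphic as complex $\Gamma$-representations, or invoke the identification conventions under which the cited source states the result; this orientation/sign obstruction is a genuine feature of the complex-type case, not mere bookkeeping. The quaternionic sub-case is in fact more forgiving (any two equivariant complex structures are conjugate inside $\mathbb{H}$, e.g.\ by $j$), but ``treated analogously'' hides precisely the step where the two sub-cases differ. Since the application in the paper only uses the $V\oplus V$ case, your proof suffices for that purpose, but as a proof of the lemma as stated it is incomplete.
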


For a proof, see \cite[Ch. 16]{Golubitsky}.  Lemma \ref{lem3}(a) gives that eigenvalues of multiplicity $m$ cross with nonzero speed. For $m>1$, an extension of the standard Hopf theorem is needed.

The amount of symmetry present in a solution $u$ to the system is measured by the isotropy subgroup
\[\Sigma_u=\{\sigma\in \Gamma\,|\,\sigma\cdot u=u\}.\]  We also have the space of solutions fixed by the isotropy subgroups, \[\mathrm{Fix}(\Sigma_u)=\{v\in\mathbb{R}^n\,|\,\sigma\cdot v=v\,\,\forall\sigma\in\Sigma_u\}.\]  Using these, we then have the following theorem:

\begin{Theorem}
    Let $\Sigma_u$ be an isotropy subgroup of a group $\Gamma$ such that $\dim \mathrm{Fix}(\Sigma_u)=2$.  Assume that \[(dG)_{0;0}=J\] meaning that $\Gamma$ acts absolutely irreducibly.  Further assume that the eigenvalue crossing condition $\mu'(0)\neq0$ holds.  Then there is a unique branch of small-amplitude periodic solutions to the bifurcation problem of period near $2\pi$ whose spatial symmetries are given by $\Sigma_u$.
\end{Theorem}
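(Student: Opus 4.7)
The plan is to combine a Lyapunov–Schmidt reduction in the space of $2\pi$-periodic functions with a restriction to the fixed-point subspace of $\Sigma_u$, reducing the equivariant bifurcation problem to a standard implicit function theorem argument in two real parameters (amplitude and frequency detuning). The key structural observation is that the isotropy hypothesis $\dim\mathrm{Fix}(\Sigma_u)=2$ is precisely the condition needed so that, after reduction, the system collapses to a solvable scalar-like problem.

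First, I would rescale time by a factor $1+\tilde\omega$ and seek $2\pi$-periodic solutions of $(1+\tilde\omega)\partial_\tau u + G(u;c) = 0$ on $H^1(S^1;\mathbb{R}^n)$. Since $(dG)_{0;0}=J$ has eigenvalues $\pm i$ each with multiplicity $n/2$, the linearization at $(u,c,\tilde\omega)=(0,0,0)$ has kernel given by the $\pm 1$ Fourier modes of eigenvectors of $J$, a subspace naturally isomorphic to $\mathbb{R}^n$. A standard Lyapunov–Schmidt reduction then produces a smooth bifurcation map $\Theta:\mathbb{R}^n\times\mathbb{R}^2\to\mathbb{R}^n$ whose zeros correspond to small-amplitude periodic solutions. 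Because $G$ commutes with $\Gamma$ and the rescaled time derivative commutes with the $S^1$-action of time translation, the reduced map $\Theta$ inherits equivariance under $\Gamma\times S^1$.

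Next, I would restrict the reduced problem to $\mathrm{Fix}(\Sigma_u)$. The $\Sigma_u$-equivariance of $\Theta$ implies $\Theta$ maps $\mathrm{Fix}(\Sigma_u)$ into itself, giving a well-posed subsystem of two real equations in three real unknowns (two real amplitude coordinates together with $\tilde c$ and $\tilde\omega$). The $S^1$-action induced by time translation (combined with any $\Gamma$-action preserving $\mathrm{Fix}(\Sigma_u)$) acts on the 2-dimensional fixed subspace as rotation, so in polar coordinates $(\rho,\phi)$ equivariance forces $\Theta\big|_{\mathrm{Fix}(\Sigma_u)}$ to depend on $\rho$, $\tilde c$, and $\tilde\omega$ but not on $\phi$. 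By Lemma \ref{lem3}(a) and the crossing assumption $\mu'(0)\neq 0$, the Jacobian with respect to $(\tilde c,\tilde\omega)$ is invertible at $\rho=0$, and the implicit function theorem produces a unique smooth branch $(\tilde c(\rho),\tilde\omega(\rho))$ parameterized by the amplitude $\rho\geq 0$.

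The main subtlety, rather than a genuine obstacle, is correctly identifying the $\Gamma\times S^1$-equivariance on the reduced map and verifying that the transversality from $\mu'(0)\neq 0$ on the full space $\mathbb{R}^n$ transfers to the restricted two-dimensional problem. This is where absolute irreducibility and the block form of $J$ are essential: they guarantee that every eigenvalue of $(dG)_{0;c}$ moves at the same rate $\mu'(0)$, so the restricted linearization also crosses transversely. By construction, the resulting solutions lie in $\mathrm{Fix}(\Sigma_u)$ and thus have spatial symmetries containing $\Sigma_u$; equality of the isotropy follows from the hypothesis that $\Sigma_u$ is an isotropy subgroup together with generic openness along the branch. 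Uniqueness of the branch is inherited directly from the uniqueness in the implicit function theorem applied on $\mathrm{Fix}(\Sigma_u)$, completing the proof.
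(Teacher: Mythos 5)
Your sketch is essentially the argument the paper relies on: the paper does not prove this theorem itself but defers to Golubitsky--Stewart--Schaeffer (Ch.~16), and the proof there is exactly your route --- Lyapunov--Schmidt reduction on the space of $2\pi$-periodic functions, restriction to $\mathrm{Fix}(\Sigma_u)$, use of the central $S^1$-action (time translation) to reduce the restricted two-dimensional system to $\phi$-independent amplitude equations, and the implicit function theorem in $(\tilde c,\tilde\omega)$ via $\mu'(0)\neq0$. The only point to tighten is that $\Sigma_u$ must be understood as an isotropy subgroup of $\Gamma\times S^1$ acting on the kernel (so that twisted subgroups such as the rotating-wave isotropy $\widetilde{SO}(2)$ are covered), and the exactness of the symmetry of the bifurcating solutions follows from $\Sigma_u$ being an isotropy subgroup together with the free $S^1$-action on the punctured kernel rather than from a genericity argument; with these readings your proof goes through and is the same as the cited one.
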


For a proof, see \cite[Ch. 16]{Golubitsky}.  Because this theorem has found solutions which are periodic in time, the symmetry group of the solutions is not just $\Sigma_u$, but $\Sigma_u\times S^1$, taking into account the time translation symmetry.  This tells us that the full group of symmetries of the problem will be $\Gamma\times S^1$, which now not only accounts for the spatial symmetries, but also the temporal periodicity.

In our problem of the Cahn-Hilliard equation in two dimensions we note that from the reduced equations achieved by the Lyapunov-Schmidt reduction, our bifurcation equation becomes real four dimensional (considering $\Tilde{\omega}$ to be constant and taking $\Tilde{c}$ as our bifurcation parameter), mapping $\mathbb{R}^4\times\mathbb{R}\to\mathbb{R}^4$.  Thus we have $n=4$, and hence $m=2$.  Additionally, there are two different symmetries in the transverse spatial variable: the translation symmetry which corresponds to rotations, and the $y$-reflection symmetry, which corresponds to the standing waves.  Thus we have the symmetry group $\Gamma=O(2)$ in our case, and we must consider the isotropy subgroups of $O(2)\times S^1$.

Notably, there are two maximal isotropy subgroups of $O(2)\times S^1$: one corresponding to rotations denoted $\widetilde{SO}(2)=\{(\theta,\theta)\,\,|\,\,\theta\in S^1\}$, and one corresponding to reflections denoted $\mathbb{Z}_2\oplus\mathbb{Z}_2^c$.  The rotation corresponds to rotating waves, which appear as oblique stripes.  The reflection corresponds to standing waves, which appear as checkerboard patterns.

\bibliographystyle{plain}
\bibliography{Oblique_and_checkerboard_patterns_in_the_quenched_Cahn-Hilliard_model}

\end{document}